\def\RP{\protect\operatorname{\mathbb{R}P}}
\def\TC{\protect\operatorname{TC}}
\def\cat{\protect\operatorname{cat}}
\def\cbe{\protect\operatorname{cbe}}
\def\zcl{\protect\operatorname{zcl}}
\def\hdim{\protect\operatorname{hdim}}
\def\max{\protect\operatorname{max}}
\def\Imm{\protect\operatorname{Imm}}
\newtheorem{proposition}{Proposition}[section]
\newtheorem{definition}[proposition]{Definition}
\newtheorem{theo}[proposition]{Theorem}
\newtheorem{remark}[proposition]{Remark}
\newtheorem{example}[proposition]{Example}
\newtheorem{conjecture}[proposition]{Conjecture}
\title{The stability of the higher topological complexity of real projective spaces: an approach to their immersion dimension}
\author{Natalia Cadavid-Aguilar, Jes\'us Gonz\'alez\thanks{Partially supported by Conacyt Research Grant 221221.} \hspace{.2mm}, and Aldo Guzm\'an-S\'aenz}
\date{\today}
\begin{document}

\maketitle

\begin{abstract}
The $s$-th higher topological complexity of a space $X$, $\TC_s(X)$, can be estimated from above by homotopical methods, and from below by homological methods. We give a thorough analysis of the gap between such estimates when $X=\RP^m$, the real projective space of dimension $m$. In particular, we describe a number $r(m)$, which depends on the structure of zeros and ones in the binary expansion of $m$, and with the property that $\TC_s(\RP^m)$ is given by $sm$ with an error of at most one provided $s\geq r(m)$ and $m\not\equiv3\bmod4$ (the error vanishes for even $m$). The latter fact appears to be closely related to the estimation of the Euclidean immersion dimension of $\RP^m$. We illustrate the phenomenon in the case $m=3\cdot2^a$.
\end{abstract}

\medskip
\noindent{{\it 2010 Mathematics Subject Classification}:  55M30, 57R42, 68T40, 70B15.}

\noindent{{\it Keywords and phrases:} Higher topological complexity, zero-divisors cup-length, Euclidean immersion problem, real projective space.}

\tableofcontents

\section{Introduction and motivation}\label{secintro}
Farber's notion of topological complexity ($\TC$) was introduced in~\cite{Far,MR2074919} as a way to study the motion planning problem in robotics from a topological perspective. Due to its homotopy invariance, the concept quickly captured the attention of algebraic topologists who began to study the homotopy $\TC$-phenomenology. In particular, Farber's $\TC$ was soon identified as a special instance of a slightly more general concept. Rudyak's sequential (also known as ``higher'') topological complexity $\TC_s$, which recovers Farber's $\TC$ if $s=2$, can be thought of as a measure of the robustness to noise of motion planning algorithms in automated multitasking processes~(\cite{bgrt,Ru10}). Rudyak's $\TC_s$ resembles Farber's $\TC_2$ in many respects, and a number of properties and computations for $\TC_2$ can be carried over in the $\TC_s$ realm. Yet, some subtle differences between the original and sequential concepts arise~(see for instance~\cite{GGGL}). The goal of this paper is to study one such difference, which seems to lie right at the heart of the problem of finding optimal immersions of manifolds into Euclidean spaces.

\medskip
A key connection of the $\TC$-ideas with the mathematics from last century's golden age in topology came at an early stage in the $\TC$-development. As shown in~\cite{MR1988783}, the calculation of the topological complexity of a real projective space $\RP^m$ is equivalent to the determination of the smallest dimension of Euclidean spaces where $\RP^m$ can be immersed: $\TC(\RP^m)=\Imm(\RP^m)$\footnote{This equality is off by one in the three cases where $\RP^m$ is parallelizable.}. In retrospective, the immersion problem of real projective spaces in particular, and of manifolds in general, triggered much of the algebraic topology developments in the 1970's and 1980's. However, it is rather ironic that, despite the fact that algebraic topology is currently a highly sophisticated and active research field, the Euclidean immersion problem of real projective spaces stands as an open (and particularly difficult) challenge that has seen only scattered progress in the last 20 years. In very crude terms, the evidence collected through more than 65 years of research on the subject suggests a relation of the form $\TC_2(\RP^m)=2m-\delta(m)$ for some function $\delta(m)$ whose first order of approximation would be given by $2\alpha(m)$---twice the number of ones appearing in the binary expansion of $m$. Indeed, in the mid 1980's, after a period packed with new immersion and non-immersion results for $\RP^m$, some experts believed\footnote{See the Mathematical Reviews' review for~\cite{MR769162}.} that the optimal Euclidean immersion of $\RP^m$ (that is, the value of $\TC_2(\RP^m)$) would be in dimension
\begin{equation}\label{belief80s}
\Imm(\RP^m)=2m-2\alpha(m)+o(\alpha(m)).
\end{equation}

Such a state of affairs is in high contrast with the situation for the (moderately) higher topological complexity of real projective spaces. It is standard that $\TC_s(\RP^m)=sm-\delta_s(m)$ for some non-negative integer $\delta_s(m)$. While $\delta_2=\delta$, the key actor in the previous paragraph, it is shown in~\cite{GGL} by cohomological methods that
\begin{equation}\label{bienacotados}
\mbox{$\delta_s(m)\in\{0,1\}$ for $m\not\equiv3\bmod4$,}
\end{equation}
provided $s\geq\ell(m)$, where
$$
\ell(m)=\begin{cases} m+1,& \mbox{if $m$ is even;}\\ 
\frac{m+1}2, & \mbox{if $m\equiv1\bmod4$,}
\end{cases}
$$
and in fact 
\begin{equation}\label{andinfact}
\mbox{$\delta_s(m)=0$, \ if $m$ is even}
\end{equation}
(and $s\geq\ell(m))$. In the light of such a result, it is natural to propose that a reasonable understanding of the nature of $\delta_2$ (at least for $m\not\equiv3\bmod4$) would follow from a global understanding of the increasing behavior of the finite sequence of numbers
\begin{equation}\label{sqnm}
\delta_{\ell(m)}(m),\;\delta_{\ell(m)-1}(m),\;\ldots,\;\delta_2(m).
\end{equation}
The weakness of such a proposal steams from the fact that the length of the sequence~(\ref{sqnm}) is linear in $m$, while we have pointed out that $\delta_2(m)$ is expected to have (big $O$, at least) order $\alpha(m)$. This means that, even if the sequence~(\ref{sqnm}) was monotonic, many pairs of its consecutive elements would have to be equal. Therefore, the task of understanding $\delta_2(m)$ would have only been replaced by the apparently harder problem of deciding which instances of consecutive pairs of elements in~(\ref{sqnm}) differ ---and by how much they differ. 

\medskip
The goal of this paper is to sustain the proposal in the previous paragraph by showing that, in fact, a large portion of the initial elements in the sequence~(\ref{sqnm}) remain well-controlled in the sense that they satisfy~(\ref{bienacotados}) and~(\ref{andinfact}). More specifically, consider the tail elements in~(\ref{sqnm})
\begin{equation}\label{sqnmchica}
\delta_{\lambda(m)}(m),\; \delta_{\lambda(m)-1}(m),\;\ldots,\;\delta_2(m)
\end{equation}
that are \emph{not} well-controlled (in the above sense). Unlike the linear function $\ell(m)$,  $\lambda(m)$ is a much smaller function; it is estimated from above (in Theorem~\ref{nataliathm}) by a function $r(m)$ which depends in a subtle manner on the number \emph{and distribution} of ones in the binary expansion of $m$. Remark~\ref{estimacion} below discusses the interconnection between the functions $\lambda(m)$ and $r(m)$, while the series of propositions following Definition~\ref{ksdiwjdnc} in Section~\ref{sectionbehavior} sample the nature of the latter function. In addition, we provide evidence suggesting that the sequence~(\ref{sqnmchica}) actually has a nicely regular monotonic increasing behavior in some cases. The moral is, then, that a global understanding of $\Imm(\RP^m)$ would come from a reasonable understanding of the increasing rate of the terms in the sequence~(\ref{sqnmchica}). See Conjecture~\ref{noimmpot} in Section~\ref{sectionapproach} and its subsequent discussion for a sample of how a potentially nice behavior of the sequence~(\ref{sqnmchica}) would lead to new and interesting (non-)immersion results for real projective spaces. This paper thus represents the initial step in the authors' program to study the mysterious function $\delta(m)$ through the sequence of (increasingly more complicated) numbers $$\TC_{\lambda(m)}(\RP^m)\geq\TC_{\lambda(m)-1}(\RP^m)\geq\cdots\geq\TC_2(\RP^m)=\Imm(\RP^m).$$

Section~\ref{sectionpreliminaries} of this paper reviews the basic $\TC_s$ definitions and auxiliary results we need. Section~\ref{sectioncohomology} deals with the cohomology component in the estimation of $\lambda(m)$. 

\section{Preliminaries}
\label{sectionpreliminaries}
For an integer $s\geq2$, the \emph{$s$-th higher} (also referred to as \emph{sequential}) \emph{topological complexity} of a path connected space $X$, $\TC_{s}(X)$, is defined by Rudyak in~\cite{Ru10} as the reduced Schwarz genus of the fibration
\begin{equation}\label{evaluationmaps}
e_s=e^X_{s}:X^{J_{s}}\to X^{s}.
\end{equation}
Here $J_s$ is the wedge sum of $s$ (ordered) copies of the interval $[0,1]$, where $0\in[0,1]$ is the base point for the wedge, and $$e_s(f_1, \ldots, f_s)=(f_1(1), \ldots, f_s(1)), \quad \quad \mbox{for}  \quad  (f_1, \ldots, f_s) \in X^{J_s},$$ is the map evaluating at the extremes of each interval. Thus $\TC_{s}(X)+1$ is the smallest cardinality of open covers $\{U_i\}_i$ of $X^s$ so that $e_s$ admits a continuous section $\sigma_i$ on each $U_i$. The open sets $U_i$ in such an open cover are called {\it local domains}, the corresponding sections $\sigma_i$ are called {\it local rules}, and the resulting family of pairs $\{(U_i,\sigma_i)\}$ is called a {\it motion planner}. We say that such a family is an {\it optimal motion planner} if it has $\TC_{s}(X)+1$ local domains. This number is a generalization of the concept of topological complexity introduced by Farber in \cite{Far} as a model to study the continuity instabilities in the motion planning of an autonomous system (robot) whose space of configurations is $X$. The term ``higher'' comes by considering the base space $X^s$ of $e_s$ as a series of prescribed stages in the robot motion planning, while Farber's original case $s=2$ deals only with the space $X\times X$ of initial-final stages.

\medskip
Most of the existing methods to estimate the topological complexity of a given space are cohomological in nature and are based on some form of obstruction theory. One of the most successful such methods, as described in~\cite[Theorem~3.9]{bgrt}, is: 

\begin{proposition}\label{ulbTCn}
For a $c$-connected space $X$, $\zcl_s(X)\leq\TC_s(X)\leq s\hdim(X)/(c+1).$
\end{proposition}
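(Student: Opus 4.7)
The plan is to treat each inequality as an instance of the general Schwarz genus machinery applied to the fibration $e_s$ of~(\ref{evaluationmaps}). The key preliminary observation is that the wedge $J_s$ is contractible, so evaluation at the wedge point yields a homotopy equivalence $X^{J_s}\simeq X$ under which $e_s$ corresponds to the iterated diagonal $\Delta_s\colon X\to X^s$. Both estimates in the statement then follow from this identification by feeding it into classical upper and lower bounds for the Schwarz genus.

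For the lower bound, I would invoke Schwarz's cohomological criterion. The induced map $e_s^*\colon H^*(X^s)\to H^*(X^{J_s})\cong H^*(X)$ agrees with $\Delta_s^*$, whose kernel is by definition the ideal of $s$-fold zero-divisors of $X$. Schwarz's criterion states that a nonzero product of $k$ classes in $\ker(e_s^*)$ obstructs the existence of a sectioning open cover of $X^s$ of cardinality $k+1$ or fewer; taking $k=\zcl_s(X)$ delivers $\TC_s(X)\geq\zcl_s(X)$.

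For the upper bound, I would chain three classical inequalities: (i) $\TC_s(X)\leq\cat(X^s)$, since the reduced secat of any fibration is bounded by the reduced LS-category of its base (categorical open sets of $X^s$ lift to sectioning open sets of $e_s$ through a constant map to the fibre); (ii) the product inequality $\cat(X^s)\leq s\cdot\cat(X)$; and (iii) the Ganea-type estimate $\cat(X)\leq\hdim(X)/(c+1)$ for $c$-connected CW complexes of finite dimension. Composing the three yields $\TC_s(X)\leq s\,\hdim(X)/(c+1)$. Since all four ingredients employed are classical, the only real technical point to watch is the reduced-versus-unreduced bookkeeping for the various invariants, so that the chain truly composes in the stated normalization; modulo this, the proposition is a direct corollary of the Schwarz and Ganea theorems applied to the fibration $e_s$.
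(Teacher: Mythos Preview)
Your sketch is correct and follows the standard route. Note, however, that the paper does not supply its own proof of this proposition: it simply quotes the result from \cite[Theorem~3.9]{bgrt}, so there is no ``paper's proof'' to compare against beyond that citation. Your argument is precisely the expected one underlying that reference---Schwarz's cup-length lower bound for the genus of $e_s$ (using $e_s^*\cong\Delta_s^*$), together with $\TC_s(X)\leq\cat(X^s)$ and the connectivity/dimension estimate for LS-category. One minor remark: for the upper bound you may bypass the product inequality by observing directly that $X^s$ is itself $c$-connected with $\hdim(X^s)\leq s\,\hdim(X)$, so $\cat(X^s)\leq s\,\hdim(X)/(c+1)$ in one step; your three-step chain is equally valid.
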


The notation $\hdim(X)$ stands for the (cellular) homotopy dimension of $X$, i.e.~the minimal dimension of CW complexes having the homotopy type of $X$. The $s$-th zero-divisor cup-length of $X$, $\zcl_s(X)$, is defined in purely cohomological terms. Given a commutative ring $R$, $\zcl_s(X)$ is the maximal number of elements in ker($\Delta_s^*\colon H^*(X^s;R)\to H^*(X;R)$) having a non-trivial product, where $\Delta_s\colon X \to X^s$ is the $s$-fold iterated diagonal. In this work, we will only be concerned with simple coefficients in $R=\mathbb{Z}_2$, and will omit reference of coefficients in writing a cohomology group $H^*(X)$. In these terms, $\Delta_s^*\colon H^*(X^s)=H^*(X)^{\otimes s}\to H^*(X)$ is given by the $s$-fold iterated cup-multiplication, which explains the notation ``zcl" (zero-divisors cup-length) for elements in the kernel of~$\Delta^*_s$.

\section{Cohomology input}
\label{sectioncohomology}
Recall from~\cite[Corollary~3.3]{bgrt} the inequality $\cat(X^{s-1})\leq\TC_s(X)\leq\cat(X^s)$. Since $\cat((\RP^m)^s)=sm$ for any $s$, the monotonic sequence $\TC_2(\RP^m)\leq\TC_3(\RP^m)\leq\cdots\leq\TC_s(\RP^m)\leq\cdots$ has an average linear growth. This section's goal is to study the actual deviation of the above growth from the linear function $sm$.

\begin{example}\label{hopfspaces}{\em
$\RP^1$, $\RP^3$, and $\RP^7$ are Hopf spaces, so that~\cite[Theorem~1]{MR3020869} gives $\TC_s(\RP^m)=\cat((\RP^m)^{s-1})=m(s-1)$ for any $s$ if $m\in\{1,3,7\}$.
}\end{example}

\medskip
For $m\geq1$ and $s\geq2$, set $$G_s(m)=sm-\zcl_s(\RP^m),$$ the gap in the estimate in Propositon~\ref{ulbTCn} for $\TC_s(\RP^m)$, and let $e(m)$ stand for the length of the block of consecutive ones ending the binary expansion of $m$, so that $e(m)=0$ is $m$ is even. In other words, $e(m)$ is defined by the formula
\begin{equation}\label{colaE}
m\equiv2^{e(m)}-1\bmod2^{e(m)+1}.
\end{equation}
Then, as shown in~\cite[Lemma~4.3 and Theorem~4.4]{GGL}, the sequence of non-negative integers $\{G_s(m)\}_s$ is non-increasing,
$$
G_1(m)\geq G_2(m)\geq G_3(m)\geq\cdots\geq0,
$$
and stabilizes to some non-negative integer $G(m)$ which is bounded from above by $2^{e(m)}-1$. For instance, $G(m)=0$ for all even $m$, which implies, in view of Proposition~\ref{ulbTCn}, that
\begin{equation}\label{enellimite}
\TC_s(\RP^m)=\zcl_s(\RP^m)=sm
\end{equation}
for $s$ large enough. Likewise, if $m\equiv1\bmod4$ (so $e(m)=1$), then $0\leq G(m)\leq1$, and we get
$$
sm-1\leq\TC_s(\RP^m)\leq sm
$$
again for large enough $s$. More generally, as indicated in the introduction, we let $\delta_s(m)$ be defined by the formula
$$
\TC_s(\RP^m)=sm-\delta_s(m),
$$
so that
\begin{equation}\label{acotaditos}
0\leq\delta_s(m)\leq G_s(m),
\end{equation}
in view of Proposition~\ref{ulbTCn}. 

\medskip
Although taking small values, the functions $\delta_s$ are notably difficult to deal with (specially for small values of $s$) as they reflect the intrinsic homotopy phenomenology of the multi-sectioning problem for the fibrations~(\ref{evaluationmaps}). A more accessible task is to deal with the functions $G_s$ since, by construction, these objects depend only on the mod 2 cohomology ring of $\RP^m$. However, in a large portion of the cases (e.g.~those noted in~(\ref{enellimite})), we have $\delta_s(m)=G_s(m)$, which justifies a careful analysis of the functions~$G_s$. Naturally, two central tasks in such a direction are $(i)$ the computation of the stabilized $G(m)$, and $(ii)$ the determination of the smallest integer $s(m)\geq2$ satisfying
\begin{equation}\label{concretars0m}
G_s(m)=G(m)\;\;\,\mbox{for \ $s\geq s(m)$.}
\end{equation}

Both tasks are addressed in this work: We have mentioned that $G(m)\leq2^{e(m)}-1$, and evidence has been given in~\cite{GGL} (see Example~\ref{el de la evidencia} below) to conjecture that, in fact, $G(m)=2^{e(m)}-1$ for all $m$. The conjecture is proved in Theorem~\ref{estabilizacion} below. On the other hand, $s(m)$ is estimated in Theorem~\ref{nataliathm} by a function whose value in $m$ depends strongly on the number and distribution of ones in the dyadic expansion of $m$ (see Section~\ref{sectionbehavior} for concrete examples). 

\begin{example}\label{el de la evidencia}
{\em It follows from~\cite[Example~4.6]{GGL} that $G(2^e-1)=2^e-1$ and $s(2^e-1)=2$ for any $e\geq1$. Such a fact can be thought of as suggesting that the hardness of computing $\TC_s(\RP^{2^e-1})$ is independent of~$s$. (This should be compared with the general belief that the hardest instances in the Euclidean immersion problem for real projective spaces $\RP^{m}$ arise when $m+1$ is a 2-power.) In the same direction, the moral in Theorem~\ref{nataliathm} below is that, as long as $s$ is moderately large enough, the computation of $\TC_s(\RP^m)$ would be as hard as the computation of $\TC_s(\RP^{2^{e(m)}-1})$. Of course, the smaller $e(m)$ is, the more useful the lesson becomes.
}\end{example}

\begin{theo}\label{estabilizacion}
With the notation in~(\ref{colaE}), $G(m)=2^{e(m)}-1$.
\end{theo}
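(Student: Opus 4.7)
Since the upper bound $G(m) \leq 2^{e(m)} - 1$ has already been noted in the paper, it remains to prove the matching lower bound $G(m) \geq 2^{e(m)} - 1$. Because $\{G_s(m)\}_s$ is non-increasing with limit $G(m)$, this amounts to showing $G_s(m) \geq 2^{e(m)} - 1$ for every $s \geq 2$, i.e.\ the cohomological upper bound $\zcl_s(\RP^m) \leq sm - (2^{e(m)} - 1)$. The inequality is trivial for even $m$ (where $e(m) = 0$); from now on set $e := e(m)$ and assume $m$ is odd, so $e \geq 1$.

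Work in $R = \mathbb{Z}_2[x_1,\ldots,x_s]/(x_i^{m+1})$ with $y_i := x_1 + x_i$, so that $I := \ker \Delta_s^*$ is the ideal generated by $y_2, \ldots, y_s$, and $\zcl_s(\RP^m)$ equals the largest $N$ with $I^N \neq 0$. Applying Lucas' theorem to $m + 1 = 2^e(2q+1)$ shows that $\binom{m+1}{j}$ is odd precisely when $2^e \mid j$, so the relation $(x_1 + x_i)^{m+1} = 0$ in $R$ takes the form
\[
y_i^{m+1} \;=\; \sum_{\substack{0 < j < m+1 \\ 2^e \,\mid\, j}} x_1^{m+1-j}\, y_i^{j},
\]
exchanging a high $y$-power for lower $y$-powers multiplied by strictly positive powers of $x_1$; iterating also gives $y_i^{K} = 0$, where $K$ is the smallest power of $2$ strictly exceeding $m$.

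Setting $N := sm - (2^e - 1) + 1$, my goal is to prove $I^N = 0$. An $N$-fold product $\alpha_1 \cdots \alpha_N$ of elements of $I$ has total cohomological degree in $[N, sm]$, so the degree excess over $N$ is at most $2^e - 2$; in particular at most $2^e - 2$ factors can have degree $> 1$, and all remaining factors are $\mathbb{Z}_2$-linear combinations of $y_2, \ldots, y_s$. Expanding and reducing using the relation above yields a sum of monomials $x_1^{c}\, y_2^{a_2}\cdots y_s^{a_s}$ (with $a_i \leq K - 1$ and $c \leq m$). The central claim is that every such monomial vanishes in $R$ because its minimum attainable $x_1$-exponent strictly exceeds $m$. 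Explicitly, setting $j_i^{\ast} := \max\{j \leq m : \binom{a_i}{j}\text{ odd}\}$, the minimum $x_1$-exponent contributed by $\prod y_i^{a_i}$ equals $\sum_i (a_i - j_i^{\ast})$; combined with the $x_1$-contribution $c$ absorbed from the higher-degree factors and the successive Lucas reductions, the $2^e \mid j$ constraint and the parity of odd $m$ (which keeps bit zero active) force $c + \sum_i (a_i - j_i^{\ast}) > m$.

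The main obstacle is the uniform bookkeeping in the excess-degree regime that arises when $e \geq 2$: several factors may have degree $> 1$, and several instances of the Lucas reduction may chain together. Packaging this into a clean combinatorial statement controlling both the accumulated $x_1$-contribution $c$ and the tuple $(a_2, \ldots, a_s)$ across all admissible configurations is the technical heart of the argument. Once the combinatorial lemma is in place, $I^N = 0$ follows, giving $\zcl_s(\RP^m) \leq N - 1 = sm - (2^{e(m)} - 1)$; combined with the already-noted reverse inequality $G(m) \leq 2^{e(m)} - 1$, this yields $G(m) = 2^{e(m)} - 1$.
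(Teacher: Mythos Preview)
Your proposal has a genuine gap: the ``combinatorial lemma'' that you describe as ``the technical heart of the argument'' is never stated precisely, let alone proved. You correctly reduce the question to showing that the minimal $x_1$-exponent appearing in the expansion of a monomial $x_1^{c}\prod_i y_i^{a_i}$ exceeds~$m$, and you correctly identify that exponent as $c+\sum_i(a_i-j_i^{\ast})$; but then you simply assert that ``the $2^e\mid j$ constraint and the parity of odd $m$ \ldots\ force $c+\sum_i(a_i-j_i^{\ast})>m$''. That inequality \emph{is} the content of the theorem, and nothing in the surrounding discussion (the degree-excess count, the number of factors of degree $>1$, the iterated reductions) is ever assembled into an argument establishing it. Two side remarks: your displayed relation for $y_i^{m+1}$ is not quite right, since by Lucas $\binom{m+1}{j}\equiv1\bmod 2$ requires every binary digit of $j$ to lie below the corresponding digit of $m+1$, a condition strictly stronger than $2^e\mid j$ (e.g.\ $m=39$, $j=16$); and the detour through arbitrary products $\alpha_1\cdots\alpha_N$ is unnecessary, since $I=(y_2,\ldots,y_s)$ already reduces the problem to monomials $y_2^{a_2}\cdots y_s^{a_s}$ with $\sum a_i=N$.

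The paper supplies exactly the missing step, and it is short. Writing $m+1=2^eq$ with $q$ odd, in characteristic~$2$ one has
\[
(x_1+x_j)^{m+1}=\bigl(x_1^{2^e}+x_j^{2^e}\bigr)^{q},
\]
and since $x_j^{2^eq}=x_j^{m+1}=0$ every surviving term carries at least one factor $x_1^{2^e}$. Iterating, $(x_1+x_j)^{m+i_j}$ is divisible by $x_1^{(u+1)2^e}$ whenever $i_j>2^e u$. Now suppose a nonzero product $\prod_{j=2}^{s}(x_1+x_j)^{m+i_j}$ has total degree exceeding $sm-(2^e-1)$, i.e.\ $\sum_j i_j>m-2^e+1$; letting $p_c$ count the $i_j$ in the interval $(2^e(c-1),2^ec]$, one gets $\sum_c c\,p_c\geq q$, hence the product is divisible by $x_1^{2^e\sum_c c\,p_c}$ with exponent at least $2^eq=m+1$, a contradiction. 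This two-line count is precisely the bound $\sum_i(a_i-j_i^{\ast})>m$ you were aiming for, obtained without any elaborate bookkeeping; once you see the Frobenius factorisation of $(x_1+x_j)^{m+1}$, the rest is immediate.
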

The proof is given after the statement and proof of Theorem~\ref{nataliathm} below.

\begin{definition}\label{nataliadef}
Let $m$ be a positive integer such that $m+1$ is not a 2-power, and set $e=e(m)$. Let $k$ be the first positive integer with $2^k>m$ (so $k>e$), and set $d_0=2^k-m-1$ (so $d_0$ is a positive integer divisible by $2^e$). Consider the non-negative integer $t=(d_0-2^e)/2^e$ and, for $1\leq\ell\leq t$, set $d_\ell=d_0-2^e \ell$ (so $d_0>d_1>\cdots>d_t=2^e$). Define non-negative integers $r_\ell$ $(0\leq\ell\leq t)$ by the recursive equations
\begin{align*}
r_0=&\begin{cases}
\left\lfloor\frac{m-(2^e-1)}{d_0}\right\rfloor, &
\mbox{if $\;\binom{m+d_0}{d_0}\equiv1\bmod2$;} \\
0\,, & \mbox{otherwise,}
\end{cases}\\
r_1=&\begin{cases}
\left\lfloor\frac{m-(2^e-1)-d_0r_0}{d_1}\right\rfloor, &
\mbox{if $\;\binom{m+d_1}{d_1}\equiv1\bmod2$;} \\
0\,, & \mbox{otherwise,}
\end{cases}\\
r_2=&\begin{cases}
\left\lfloor\frac{m-(2^e-1)-d_0r_0-d_1r_1}{d_2}\right\rfloor, &
\mbox{if $\;\binom{m+d_2}{d_2}\equiv1\bmod2$;} \\
0\,, & \mbox{otherwise,}
\end{cases}\\
\cdots&\\
r_t=&\begin{cases}
\left\lfloor\frac{m-(2^e-1)-d_0r_0-d_1r_1-\cdots-d_{t-1}r_{t-1}}{d_t}\right\rfloor, &
\mbox{if $\;\binom{m+d_t}{d_t}\equiv1\bmod2$;} \\
0\,, & \mbox{otherwise.}
\end{cases}
\end{align*}
Lastly, set $r(m)=1+\sum_{\ell=0}^{t}r_\ell$.
\end{definition}

In Definition~\ref{nataliadef}, the dyadic expansion of $d_0$ is the ``complement'' of that for $m$. So $\binom{m+d_0}{d_0}$ is odd (and thus $r_0=\lfloor\frac{m-(2^e-1)}{d_0}\rfloor$). Further, $d_t=2^e$ and $m\equiv2^e-1\pmod{2^{e+1}}$, so that the binomial coefficient $\binom{m+d_t}{d_t}$ is odd too. In addition, since $2^e$ divides $m-(2^e-1)$ as well as each $d_\ell$, we actually have
\begin{equation}\label{rt}
r_t=\frac{m-(2^e-1)-d_0r_0-d_1r_1-\cdots-d_{t-1}r_{t-1}}{d_t}.
\end{equation}

\begin{theo}\label{nataliathm}
With the notation in~(\ref{concretars0m}) and Definition~\ref{nataliadef}, $s(m)\leq r(m)$.
\end{theo}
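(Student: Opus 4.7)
The plan is to construct, in $H^*((\RP^m)^{r(m)};\mathbb{Z}_2)$, an explicit nonzero product of $r(m)m - (2^{e(m)}-1)$ zero-divisors. This yields $\zcl_{r(m)}(\RP^m) \geq r(m)m - (2^{e(m)}-1)$, equivalently $G_{r(m)}(m) \leq 2^{e(m)}-1$. Since $G_s(m)$ is non-increasing with limit $G(m)$ and $G(m)\leq 2^{e(m)}-1$ is already known (and in fact equals $2^{e(m)}-1$ by Theorem \ref{estabilizacion}, whose proof follows below), this forces $G_s(m)=G(m)$ for every $s\geq r(m)$---precisely $s(m)\leq r(m)$.

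To produce the product, set $s=r(m)$ and use the presentation
\[ H^*((\RP^m)^s;\mathbb{Z}_2) = \mathbb{Z}_2[x_1,\ldots,x_s]/(x_1^{m+1},\ldots,x_s^{m+1}),\]
in which $\ker\Delta_s^*$ is the ideal generated by $\bar{x}_i := x_i+x_1$ for $i\geq 2$. Partition $\{2,\ldots,s\}$ into $t+1$ blocks of sizes $r_0,r_1,\ldots,r_t$ (possible since these sum to $s-1$ by construction), let $\ell(i)$ denote the block of $i$, and consider
\[ P = \prod_{i=2}^s \bar{x}_i^{\,m+d_{\ell(i)}} \in (\ker\Delta_s^*)^N, \qquad N=(s-1)m + \sum_{\ell=0}^t r_\ell d_\ell.\]

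To show $P\neq 0$, expand each factor binomially,
\[ (x_1+x_i)^{m+d_{\ell(i)}} = \sum_{k_i}\binom{m+d_{\ell(i)}}{k_i} x_1^{k_i} x_i^{m+d_{\ell(i)}-k_i},\]
and isolate the monomial $M = x_1^{m-(2^e-1)}\prod_{i=2}^s x_i^m$. Matching the $x_i$-exponents forces the \emph{unique} choice $k_i=d_{\ell(i)}$ for every $i$; the resulting total $x_1$-exponent is $\sum_\ell r_\ell d_\ell$, which equals $m-(2^e-1)\leq m$ by equation (\ref{rt}), so $M$ survives the truncation $x_1^{m+1}=0$. Its coefficient is $\prod_{i=2}^s\binom{m+d_{\ell(i)}}{d_{\ell(i)}}$, a product of odd binomials---blocks with even $\binom{m+d_\ell}{d_\ell}$ satisfy $r_\ell=0$ by Definition \ref{nataliadef} and contribute no factor. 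Hence $P\neq 0$ and the number of factors is $N=(s-1)m+m-(2^e-1)=sm-(2^e-1)$, as required.

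The main technical input is the arithmetic identity $\sum_{\ell=0}^t r_\ell d_\ell = m-(2^{e(m)}-1)$. This reduces to equation (\ref{rt}) together with the claim that $r_t$ is computed by the floor formula rather than the ``otherwise'' branch, which in turn requires $\binom{m+d_t}{d_t}=\binom{m+2^e}{2^e}$ to be odd. By (\ref{colaE}), $m\equiv 2^e-1 \pmod{2^{e+1}}$, so the $2^e$-bit of $m+2^e$ equals $1$, and Lucas' theorem supplies the oddness. The one remaining routine point is that the coefficient of $M$ in $P$ involves no cancellation, since distinct multi-indices $(k_2,\ldots,k_s)$ produce distinct monomials in the expansion; everything else is bookkeeping.
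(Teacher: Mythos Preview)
Your proof is correct and follows essentially the same approach as the paper: you construct the identical product $\prod_i (x_1+x_i)^{m+d_{\ell(i)}}$ of zero-divisors and verify that the basis monomial $x_1^{m-(2^e-1)}x_2^m\cdots x_s^m$ appears with odd coefficient, using equation~(\ref{rt}) for the $x_1$-exponent and Definition~\ref{nataliadef} to guarantee that only odd binomial coefficients occur. Your write-up is in fact slightly more explicit than the paper's in spelling out the uniqueness of the contributing multi-index and the reliance on Theorem~\ref{estabilizacion} to pass from $G_{r(m)}(m)\leq 2^{e}-1$ to $G_{r(m)}(m)=G(m)$.
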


Definition~\ref{nataliadef} and Theorems~\ref{estabilizacion} and~\ref{nataliathm} were suggested by a large amount of computer calculations. In fact, on the basis of the computational evidence, we conjecture that the conclusion in Theorem~\ref{nataliathm} can be strengthened to the equality $s(m)=r(m)$.

\begin{remark}\label{estimacion}{\em
Recall the function $\lambda(m)$ introduced in~(\ref{sqnmchica}) to denote the smallest integer $s$ such that $\delta_s(m)$ is not ``well-controlled''. Theorem~\ref{estabilizacion} and~(\ref{acotaditos}) allow us to pin down the idea. Namely, $\lambda(m)$ stands for the smallest integer such that $\delta_s(m)\leq2^{e(m)}-1$. Of course, the control is tighter whenever $m\not\equiv3$ mod~4. For instance, if $m$ is even, then $\delta_{\lambda(m)}(m)=0$ while $\delta_s(m)>0$ for $2\leq s<\lambda(m)$. In any case,~(\ref{acotaditos}) and Theorems~\ref{estabilizacion} and~\ref{nataliathm} give
\begin{equation}\label{3elties}
\lambda(m)\leq s(m)\leq r(m).
\end{equation}
Further, as suggested in the final section in this paper, all three inequalities in~(\ref{3elties}) seem to be sharp when $m=3\cdot2^a$ ---and, possibly, in many more instances. 
}\end{remark}

The proofs of Theorems~\ref{estabilizacion} and~\ref{nataliathm} use the following notation. For $s\geq2$, let $x_i\in H^1((\RP^m)^s)$ be the pull back of the non-trivial class in $H^1(\RP^m)$ under the $i$-th projection map $(\RP^m)^s\to\RP^m$. Note that we do not stress the dependence of $x_i$ on $s$. This is because, if $s'>s$ and $\pi_{s,s'}\colon(\RP^m)^{s'}\to(\RP^m)^{s}$ is the projection onto the first $s$ coordinates, then we think of the map induced in cohomology by $\pi_{s,s'}$ as a honest inclusion. Note that, in these conditions, the standard (graded) basis of $H^*((\RP^m)^s)$ consists of the monomials $x_1^{e_1}x_2^{e_2}\cdots x_s^{e_s}$ where $0\leq e_i\leq m$ ---recall that $x_i^{m+1}=0$.

\begin{proof}[Proof of Theorem~\ref{nataliathm}]
Let $s=r(m)$ and $s_\ell=1+\sum_{i=0}^{\ell-1}r_i$ for $0\leq\ell\leq t$. Consider the product of $s$-th zero-divisors
\begin{equation}\label{formulota}
\prod_{\ell=0}^{t}\left(
\prod_{\,i_\ell=1}^{r_\ell}(x_1+x_{s_\ell+i_\ell})^{m+d_\ell} \right).
\end{equation}
By Proposition~\ref{ulbTCn}, it suffices to check that the expansion of~(\ref{formulota}) in terms of the standard basis of $H^*((\RP^m)^s)$ contains the basis element $x_1^{m-(2^e-1)}x_2^m\cdots x_s^m$.

\smallskip
Note that the $\ell$-th factor in~(\ref{formulota}) is to be neglected if $r_\ell=0$ and, by construction, this happens whenever $\binom{m+d_\ell}{d_\ell}$ is even. On the other hand, if $r_\ell>0$ (so that $\binom{m+d_\ell}{d_\ell}$ is odd), then each of the factors $(x_1+x_{s_\ell+i_\ell})^{m+d_\ell}$ in~(\ref{formulota}) takes the form
$$
(x_1+x_{s_\ell+i_\ell})^{m+d_\ell}=x_1^{d_\ell} x_{s_\ell+i_\ell}^m+\textit{monomials involving powers $x_{s_\ell+i_\ell}^p$ with $p<m$.}
$$
Therefore, for the purpose of keeping track of basis elements of the form $x_1^{a_1}x_2^{a_2}\cdots x_s^{a_s}$ with $a_i=m$ for $2\leq i\leq s$,~(\ref{formulota}) becomes
$$
\prod_{\ell=0}^{t}\left(
\prod_{\,i_\ell=1}^{r_\ell}\left(x_1^{d_\ell} \cdot x_{s_\ell+i_\ell}^m\right) \right)=x_1^{d_0r_0+d_1r_1+\cdots+d_tr_t}x_2^m\cdots x_s^m.
$$
The result then follows from~(\ref{rt}).
\end{proof}

\begin{proof}[Proof of Theorem~\ref{estabilizacion}]
As above, set $e=e(m)$. We have mentioned that the work in~\cite{GGL} gives $G(m)\leq2^{e-1}$. Furthermore~\cite[Lemma~4.2]{GGL} shows that the ideal of $s$-th zero-divisors of $\RP^m$ is generated by the classes $x_1+x_i$ with $2\leq i\leq s$. Thus, it suffices to show that no non-zero product
\begin{equation}\label{ntp}
(x_1+x_2)^{m+i_2}(x_1+x_3)^{m+i_3}\cdots(x_1+x_s)^{m+i_s},\mbox{with $s\geq2$ and $m+i_j\geq0$,}
\end{equation}
can yield a gap $G_s(m)$ smaller than $2^e-1$. In view of~(\ref{colaE})
\begin{equation}\label{m+1}
m+1=2^eq
\end{equation}
for some positive odd integer $q$. Suppose to the contrary that there is a non-zero product~(\ref{ntp}) with $sm-\sum_{j=2}^s(m+i_j)<2^e-1$ or, equivalently, with
\begin{equation}\label{condacot}
m<2^e-1+\sum^s_{j=2}i_j.
\end{equation}
It can be assumed in addition that each $i_j$ is positive, for otherwise we just remove the corresponding factor $(x_1+x_j)^{m+i_j}$ from~(\ref{ntp}) without altering~(\ref{condacot}). In this setting, we have that
\begin{equation}\label{condivi}
\mbox{$x_1^{(u+1)2^e}$ divides $\hspace{.5mm}(x_1+x_j)^{m+i_j}$, if $\hspace{.5mm}i_j>2^e u\hspace{.5mm}$ for some $u\geq0$,} 
\end{equation}
for in fact $(x_1+x_j)^{m+i_j}=(x_1+x_j)^{m+1}(x_1+x_j)^{2^e u}(x_1+x_j)^{i_j-2^e u-1}$, where~(\ref{m+1}) gives
$$
(x_1+x_j)^{m+1}(x_1+x_j)^{2^e u}=(x_1^{2^e}+x_j^{2^e})^{q+u},
$$
which is divisible by $x_1^{(u+1)2^e}$ as $x_j^{2^eq}=x_j^{m+1}=0$.

\smallskip
Now, for $c\geq1$, let $p_c$ be the number of integers $i_2,\ldots,i_s$ in~(\ref{ntp}) that lie in the interval $$\{2^e(c-1)+1,2^e(c-1)+2,\ldots,2^ec\}.$$
Then~(\ref{m+1}) and~(\ref{condacot}) yield
$$
2^e q=m+1<2^e+\sum_{c\geq1}2^ec\,p_c\quad\mbox{i.e.}\quad q\leq\sum_c c \, p_c.
$$
The punch line is that~(\ref{condivi}) implies that~(\ref{ntp}) is divisible by $x_1^{\Sigma}$ where $$\Sigma=\sum_c 2^e c \, p_c\geq2^eq=m+1$$ which, in view of the relation $x_1^{m+1}=0$, contradicts the non-triviality of~(\ref{ntp}).
\end{proof}

\section{Binary expansions}\label{sectionbehavior}
In this section we illustrate the way in which the values of the function $r(m)$ depend on the number and distribution of ones in the binary expansion of $m$. With this in mind, it is convenient to set a suitably flexible notation.

\begin{definition}\label{ksdiwjdnc}
Let $m$ be a positive integer. Write $m=\sum_{i=0}^{\mu}b_i2^{i}\;$ with $\,b_i\in\{0,1\}$ and $b_{\mu}=1$. The binary expansion of $m$, that is, the string of zeros and ones $b_\mu\hspace{.4mm}b_{\mu-1}\cdots b_0$, starts (from the left) with a block of ones, say $n_1$ of them; then it has a block of zeros, say $z_1$ of them; then it has a second block of ones, say $n_2$ of them, and so forth. The \emph{codified binary expansion (cbe)} of $m$ is the (finite) sequence of positive integers $\cbe(m)=(n_1,z_1,n_2,\ldots)$. Note that the length of the sequence $\cbe(m)$ agrees mod 2 with $m$, and that $\mu$ is the integral part of $\log_2(n)$. It is standard to set $\alpha:=\sum n_i$ (the number of ones in the binary expansion of $m$) and $\nu:=\min\{i\colon b_i\neq0\}$ (the exponent in the highest 2-power dividing $m$). For instance, $\nu=z_\omega$ when $\cbe(m)=(n_1,z_1,\ldots,n_\omega,z_\omega)$. If we need to stress the dependence of the parameters $e$, $\alpha$, $\mu$, $\nu$, $b_i$, $n_i$, or $z_i$ on $m$, we use the notation $e(m)$, $\alpha(m)$, $\mu(m)$, $\nu(m)$, $b_i(m)$, $n_i(m)$, or $z_i(m)$, accordingly. The relation $\cbe(m)=(n_1,z_1,n_2,\ldots)$ sets a bijective correspondence from the set of positive integers $m$ to the set of finite sequences of positive integers $(n_1,z_1,n_2,\ldots)$, and we use $p_2(n_1,z_1,n_2,\cdots)=m$ for the inverse function. In fact, it will be convenient to replace the notation $p_2(n_1,z_1,n_2,\cdots)$ by the corresponding binary expansion $1^{n_1}0^{z_1}1^{n_2}\cdots$, where exponents indicate the number of times that a zero or a one is to be repeated.
\end{definition}

\begin{proposition}\label{spaced}
Let $m$ be even with $\cbe(m)=(n_1,z_1,\ldots,n_\omega,z_\omega)$ and $n_1<n_2<\cdots<n_\omega$. Assume $n_u<z_u$ for $1\leq u\leq\omega$ (this condition can be thought of as saying that the blocks of ones in the binary expansion of $m$ are ``suitably'' spaced). Then $r(m)=1+2^{n_\omega}$. More explicitly, the non-zero numbers $r_\ell$ $(0\leq\ell\leq t)$ in Definition~\ref{nataliadef} hold for $\ell\in\{\kappa_u,\ell_u\colon 1\leq u\leq \omega\}$ where
\begin{align*}
\kappa_u&=1^{z_1}0^{n_2}1^{z_2}\cdots0^{n_{u-1}}1^{z_{u-1}}0^{n_u+z_u+\cdots+n_\omega+z_\omega};\\
\ell_u&=1^{z_1}0^{n_2}1^{z_2}\cdots0^{n_{u-1}}1^{z_{u-1}}0^{n_u}1^{z_u-n_u}0^{n_u+n_{u+1}+z_{u+1}+\cdots+n_\omega+z_\omega}.
\end{align*}
(Just as the sum $\sum_{i=u+1}^{\omega}(n_{i}+z_{i})$ is ignored for $u=\omega$, the initial segment $1^{z_1}0^{n_2}1^{z_2}\cdots0^{n_{u-1}}1^{z_{u-1}}$ in the two binary expansions above should be ignored for $u=1$. For instance $\kappa_1=0$.) Furthermore
\begin{align*}
r_{\kappa_1}=2^{n_1}-1 \mbox{ \ with \ } d_{\kappa_1}={}&\,1^{z_1}0^{n_2}1^{z_2}\cdots0^{n_\omega}1^{z_\omega},\\
r_{\ell_1}=1 \mbox{ \ with \ }d_{\ell_1}={}&\,1^{n_1}0^{n_2}1^{z_2}\cdots0^{n_\omega}1^{z_\omega},
\end{align*}
and, for $u\geq2$,
\begin{align*}
r_{\kappa_u}=2^{n_u}-2^{n_{u-1}}-1 \mbox{ \ with \ } d_{\kappa_u}={}&\,1^{z_u}0^{n_{u+1}}1^{z_{u+1}}\cdots0^{n_\omega}1^{z_\omega},\\
r_{\ell_u}=1 \mbox{ \ with \ }d_{\ell_u}={}&\,1^{n_u}0^{n_{u+1}}1^{z_{u+1}}\cdots0^{n_\omega}1^{z_\omega}.
\end{align*}
\end{proposition}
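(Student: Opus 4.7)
The plan is to carry out Definition~\ref{nataliadef} explicitly for this family of $m$'s by keeping track of everything at the level of binary strings. Since $m$ is even, $e(m)=0$, so $2^e=1$ and $d_\ell=d_0-\ell$. The bit-wise complement of $m$ inside $\mu+1$ bits gives $d_0=1^{z_1}0^{n_2}1^{z_2}\cdots0^{n_\omega}1^{z_\omega}$ after stripping the leading $0^{n_1}$, which coincides with the claimed $d_{\kappa_1}$. The first move is to verify the binary subtractions $d_0-\kappa_u$ and $d_0-\ell_u$ and check that they yield the bit strings listed for $d_{\kappa_u}$ and $d_{\ell_u}$. From the displayed forms it is visible that each of these is a sub-mask of $d_0$ (its $1$-bits are a subset of those of $d_0$, hence disjoint from those of $m$), so by Lucas/Kummer all of the binomials $\binom{m+d_{\kappa_u}}{d_{\kappa_u}}$ and $\binom{m+d_{\ell_u}}{d_{\ell_u}}$ are odd, activating the floor branch of the recursion at each of the $2\omega$ indices $\kappa_u,\ell_u$.

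The next step is to track the running residue $M:=m-\sum_{i<\ell}d_ir_i$ through the recursion in the natural order $\kappa_1<\ell_1<\kappa_2<\cdots<\ell_\omega$ (which is the order of processing, since $d_\ell$ is decreasing in $\ell$). Using the identity $m=2^{\mu+1}-1-d_0$, an immediate calculation after $\kappa_1$ gives $M=2^{\mu+1}-1-d_0\cdot 2^{n_1}$, whose binary expansion is $1^{n_2}0^{z_2}\cdots1^{n_\omega}0^{z_\omega}1^{n_1}$. An inductive argument should produce an analogously explicit binary description of $M$ after each subsequent step, and from these one reads off $\lfloor M/d_{\kappa_u}\rfloor=2^{n_u}-2^{n_{u-1}}-1$ (or $2^{n_1}-1$ for $u=1$) and $\lfloor M/d_{\ell_u}\rfloor=1$. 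The telescoping identity $(2^{n_1}-1)+\sum_{u=2}^{\omega}(2^{n_u}-2^{n_{u-1}}-1)+\omega=2^{n_\omega}$ then gives $r(m)=1+2^{n_\omega}$.

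The step I expect to be the main obstacle, and the one where the spacing hypothesis $n_u<z_u$ really pays off, is to show that no $\ell\in\{0,\ldots,t\}$ outside the list $\{\kappa_u,\ell_u\}$ contributes a non-zero $r_\ell$. Sub-masks of $d_0$ correspond to arbitrary subsets of the $\omega$ blocks $B_u$ of $z_u$ consecutive support positions of $d_0$, so one must argue that each such sub-mask is either one of the already-counted $d_{\kappa_v},d_{\ell_v}$ or is strictly larger than the current $M$. The combined hypotheses $n_u<z_u$ and $n_1<\cdots<n_\omega$ are precisely what make the residues computed above fall into a ``blind spot'' of the remaining sub-masks: they are large enough not yet to have been subtracted off, but small enough that the next sub-mask of $d_0$ lying below them is exactly the next $d_{\kappa_v}$ or $d_{\ell_v}$ in the list. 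Carrying out this size comparison block-by-block across the $B_u$'s is the technical heart of the argument; without the spacing assumption, intermediate sub-masks would creep in and $r(m)$ would exceed $1+2^{n_\omega}$.
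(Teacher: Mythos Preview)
Your plan is correct and follows essentially the same route as the paper's proof: reduce (via Kummer/Lucas) to those $d_\ell$ whose support is disjoint from that of $m$, track the running residue $M$ through the distinguished indices $\kappa_u,\ell_u$ by explicit binary arithmetic (the paper carries this out in full, obtaining exactly your formula $M=1^{n_2}0^{z_2}\cdots1^{n_\omega}0^{z_\omega}1^{n_1}$ after $\kappa_1$ and its analogues thereafter), and then argue that any intermediate admissible $d_\ell$ exceeds the current $M$ so its floor is zero. The only point your outline leaves implicit is the tail $\ell_\omega<\ell\le t$: there one checks that the residue drops to $0$ after $\ell_\omega$, so every remaining $r_\ell$ vanishes automatically.
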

\begin{proof}
The assertion following Definition~\ref{nataliadef} obviously generalizes to the observation that, for any $u\in\{1,\ldots,\omega\}$,  the binary expansions of $d_{\kappa_u}$ and $d_{\ell_u}$ are complementary to that of $m$. In particular all the binomial coefficients $\binom{m+d_{\kappa_u}}{d_{\kappa_u}}$ and $\binom{m+d_{\ell_u}}{d_{\ell_u}}$ with $u\in\{1,\ldots,\omega\}$ are odd.

\smallskip
We start by considering in detail the (slightly special) case $u=1$. The equality $r_0=2^{n_1}-1$ follows from the fact that $(2^{n_1}-1)d_0\leq m<2^{n_1}d_0$, which in turns holds since
\begin{align}
m-(2^{n_1}-1)d_0&=1^{n_1}0^{z_1}1^{n_2}0^{z_2}\cdots1^{n_\omega}0^{z_\omega}-1^{z_1}0^{n_2}1^{z_2}\cdots0^{n_\omega}1^{z_\omega}0^{n_1}+1^{z_1}0^{n_2}1^{z_2}\cdots0^{n_\omega}1^{z_\omega}\nonumber\\
&=1^{n_1+z_1+n_2+z_2+\cdots+n_\omega+z_\omega}-1^{z_1}0^{n_2}1^{z_2}\cdots0^{n_\omega}1^{z_\omega}0^{n_1}\nonumber\\
&=1^{n_2}0^{z_2}\cdots1^{n_\omega}0^{z_\omega}1^{n_1}\geq0\label{mhos}
\end{align}
and $m-2^{n_1}d_0=1^{n_2}0^{z_2}\cdots1^{n_\omega}0^{z_\omega}1^{n_1}-1^{z_1}0^{n_2}1^{z_2}\cdots0^{n_\omega}1^{z_\omega}<0$, due to the assumption $n_1<z_1$.

Next we show that
\begin{equation}\label{loop1}
r_\ell=0 \;\mbox{ for } \;0<\ell<\ell_1.
\end{equation}
For such a value of $\ell$ we have
\begin{equation}\label{recia1}
0^{n_1}1^{z_1}0^{n_2}1^{z_2}\cdots0^{n_\omega}1^{z_\omega}=d_0>d_\ell=d_0-\ell>d_0-\ell_1=0^{z_1}1^{n_1}0^{n_2}1^{z_2}\cdots0^{n_\omega}1^{z_\omega},
\end{equation}
so that the binary expansion of $d_\ell$ must have at least one of the $0$'s on the right of~(\ref{recia1}) changed to a~$1$. If such a 1 appears in one of the blocks $0^{n_i}$ with $2\leq i\leq\omega$, then the binomial coefficient $\binom{m+d_\ell}{d_\ell}$ is obviously even, and so $r_\ell=0$. Otherwise, the 1 must appear in the block $0^{z_1}$, so that $d_\ell\geq2^{n_1+n_2+z_2+\cdots+n_\omega+z_\omega}$. In such a situation the vanishing of $r_\ell$ follows from the easy-to-check fact that $2^{n_1+n_2+z_2+\cdots+n_\omega+z_\omega}>m-d_0r_0$.

For the case $\ell=\ell_1$, note that
\begin{equation}\label{reaesp}
d_{\ell_1}\leq m-d_0r_0<2 d_{\ell_1},
\end{equation}
which yields $r_{\ell_1}=1$. The first inequality in~(\ref{reaesp}) holds since
$$
d_{\ell_1}+2^{n_1}d_0=0^{z_1}1^{n_1}0^{n_2}1^{z_2}\cdots0^{n_\omega}1^{z_\omega}+1^{z_1}0^{n_2}1^{z_2}\cdots0^{n_\omega}1^{z_\omega}0^{n_1}\leq1^{n_1+z_1+\cdots+n_\omega+z_\omega}=m+d_0,
$$
where the inequality comes from the fact that both $0^{z_1}1^{n_1}0^{n_2}\cdots$ and $1^{z_1}0^{n_2}\cdots$ have zeros in their $(n_1+z_1+1)$-st position counted from the left, so that any previous carry in the binary sum disappears at that spot, while no further carries appear from that point on. The second inequality in~(\ref{reaesp}) holds since
$$
m+d_0=1^{n_1+z_1+\cdots+n_\omega+z_\omega}<
0^{z_1-1}1^{n_1}0^{n_2}1^{z_2}\cdots0^{n_\omega}1^{z_\omega}0+1^{z_1}0^{n_2}1^{z_2}\cdots0^{n_\omega}1^{z_\omega}0^{n_1}=2d_{\ell_1}+2^{n_1}d_0,
$$
where the inequality is due to the fact that a carry is forced at the end of the binary sum of $0^{z_1-1}1^{n_1}\cdots$ and $1^{z_1}0^{n_2}\cdots$. 

It is convenient to note at this point that the numerator in the quotient defining the next non-trivial $r_\ell$ ($\ell>\ell_1$) is
\begin{align}
m-(2^{n_1}-1)d_0-d_{\ell_1}&=m-1^{z_1}0^{n_2}1^{z_2}\cdots0^{n_\omega}1^{z_\omega}0^{n_1}+1^{z_1}0^{n_2}1^{z_2}\cdots0^{n_\omega}1^{z_\omega}-1^{n_1}0^{n_2}1^{z_2}\cdots0^{n_\omega}1^{z_\omega}\nonumber\\
&=m-1^{z_1}0^{n_2}1^{z_2}\cdots0^{n_\omega}1^{z_\omega}0^{n_1}+1^{z_1-n_1}0^{n_1+n_2+z_2+\cdots+n_\omega+z_\omega}\nonumber\\
&=1^{n_1}0^{z_1}1^{n_2}0^{z_2}\cdots1^{n_\omega}0^{z_\omega}-1^{n_1}0^{z_1-n_1}0^{n_2}1^{z_2}\cdots0^{n_\omega}1^{z_\omega}0^{n_1}.\label{relevant}
\end{align}

The case $u=1$ will be complete once we show that $r_\ell=0$ for $\ell_1<\ell<k_2$. For such a value of $\ell$ we have
\begin{equation}\label{recia2}
0^{z_1}1^{n_1}0^{n_2}1^{z_2}\cdots0^{n_\omega}1^{z_\omega}=d_0-\ell_1>d_\ell=d_0-\ell>d_0-\kappa_2=0^{n_1+z_1}0^{n_2}1^{z_2}\cdots0^{n_\omega}1^{z_\omega},
\end{equation}
so that the binary expansion of $d_\ell$ must have at least one of the $0$'s on the right of~(\ref{recia2}) changed to a~$1$. As above, if such a 1 appears in one of the blocks $0^{n_i}$ with $2\leq i\leq\omega$, then the binomial coefficient $\binom{m+d_\ell}{d_\ell}$ is obviously even, and so $r_\ell=0$. Otherwise, the 1 must appear in the block $0^{n_1+z_1}$, so that $d_\ell\geq2^{n_2+z_2+\cdots+n_\omega+z_\omega}$. In such a situation the vanishing of $r_\ell$ follows from the fact that $2^{n_2+z_2+\cdots+n_\omega+z_\omega}$ is strictly larger than~(\ref{relevant}), which in turn is observed from the binary-sum setup below.
\begin{align*}
\overbrace{1^{n_1}0^{z_1-n_1}}^{z_1}\overbrace{0^{n_2}1^{z_2}\cdots0^{n_\omega}1^{z_\omega}}^{n_2+z_2+\cdots+n_\omega+z_\omega}\overbrace{0^{n_1}}^{n_1}&\\
\mbox{\large $+$\hspace{.8cm}}\underbrace{\rule{1mm}{0mm}}_{n_1}\underbrace{\hspace{1.2cm}\underbrace{\rule{4mm}{0mm}1}_{n_1}}_{z_1}\underbrace{0^{n_2}0^{z_2}\cdots0^{n_\omega}0^{z_\omega}}_{n_2+z_2+\cdots+n_\omega+z_\omega}\\ \raisebox{1.5mm}{\rule{6.6cm}{.2mm}}\\
1^{n_1}0^{z_1}\cdots \quad < \quad 1^{n_1}\!\underbrace{000\cdots0}_{z_1-1}1\cdots\hspace{2.36cm}
\end{align*}

The cases $u\geq2$ can now be dealt with recursively, using part of the previous analysis. For the start of the recursion we have to check that $r_{\kappa_2}=2^{n_2}-2^{n_1}-1$ or, equivalently, that
$$
(2^{n_2}-2^{n_1}-1)d_{\kappa_2}\leq m-(2^{n_1}-1)d_0-d_{\ell_1}<(2^{n_2}-2^{n_1})d_{\kappa_2}.
$$
Again, both inequalities are verifiable from the corresponding binary expansions. Indeed,~(\ref{relevant}) yields
\begin{align}
m-&(2^{n_1}-1)d_0-d_{\ell_1}-(2^{n_2}-2^{n_1}-1)d_{\kappa_2}\nonumber\\
&=1^{n_1}0^{z_1}1^{n_2}0^{z_2}\cdots1^{n_\omega}0^{z_\omega}-1^{n_1}0^{z_1-n_1}0^{n_2}1^{z_2}\cdots0^{n_\omega}1^{z_\omega}0^{n_1}-(2^{n_2}-2^{n_1}-1)d_{\kappa_2}\nonumber\\
&=1^{n_1}0^{z_1}1^{n_2}0^{z_2}\cdots1^{n_\omega}0^{z_\omega}+0^{n_2}1^{z_2}\cdots0^{n_\omega}1^{z_\omega}-1^{n_1}0^{z_1-n_1}0^{n_2}1^{z_2}\cdots0^{n_\omega}1^{z_\omega}0^{n_1}-(2^{n_2}-2^{n_1})d_{\kappa_2}\nonumber\\
&=1^{n_1}0^{z_1}1^{n_2+z_2+\cdots+n_\omega+z_\omega} -1^{n_1}0^{z_1-n_1}0^{n_2}1^{z_2}\cdots0^{n_\omega}1^{z_\omega}0^{n_1}+0^{n_2}1^{z_2}0^{n_3}1^{z_3}\cdots0^{n_\omega}1^{z_\omega}0^{n_1}-2^{n_2}d_{\kappa_2}\nonumber\\
& = 1^{n_1}0^{z_1}1^{n_2+z_2+\cdots+n_\omega+z_\omega} -1^{n_1}0^{z_1-n_1}0^{n_2+z_2+\cdots+n_\omega+z_\omega+n_1}-2^{n_2}d_{\kappa_2}\nonumber\\
& = 1^{n_1}0^{z_1}1^{n_2+z_2+\cdots+n_\omega+z_\omega} -1^{n_1}0^{z_1+n_2+z_2+\cdots+n_\omega+z_\omega}-1^{z_2}0^{n_3}1^{z_3}\cdots0^{n_\omega}1^{z_\omega}0^{n_2}\nonumber\\
& = 1^{n_1}0^{z_1}1^{n_2+z_2+\cdots+n_\omega+z_\omega} -1^{n_1}0^{z_1}1^{z_2}0^{n_3}1^{z_3}\cdots0^{n_\omega}1^{z_\omega}0^{n_2}\nonumber\\ 
&= 1^{n_3}0^{z_3}\cdots1^{n_\omega}0^{z_\omega}1^{n_2}\geq0\label{repac}
\end{align}
and so
\begin{align*}
m-(2^{n_1}-1)d_0-d_{\ell_1}-(2^{n_2}-2^{n_1})d_{\kappa_2}= 1^{n_3}0^{z_3}\cdots1^{n_\omega}0^{z_\omega}1^{n_2}-1^{z_2}0^{n_3}1^{n_3}\cdots0^{n_\omega}1^{z_\omega} <0.
%\\& = 1^{n_1}0^{z_1}1^{n_2}0^{z_2}1^{n_3}0^{z_3}\cdots1^{n_\omega}0^{z_\omega} -1^{n_1}0^{z_1}1^{z_2}0^{n_3}1^{z_3}\cdots0^{n_\omega}1^{z_\omega}0^{n_2}.
\end{align*}

Form this point on, the proof enters a recursive loop which starts (for $u=2$) with the fact that the two relations
$$
\mbox{$r_\ell=0$ for $\kappa_u<\ell<\ell_u$ \;\; and \;\; $r_{\ell_u}=2^{n_u}-2^{n_{u-1}}-1$}
$$
are shown for $2\leq u\leq w$ following the arguments proving~(\ref{loop1}) and~(\ref{reaesp}), respectively. In fact, the situation is formally identical as the reader will note by comparing~(\ref{repac}) with~(\ref{mhos}), as well as by comparing the easily verified fact that~(\ref{relevant}) takes the form $1^{n_2}0^{z_2}1^{n_3}0^{z_3}\cdots1^{n_\omega}0^{z_\omega}-0^{n_2-n_1}1^{z_2}0^{n_3}1^{z_3}\cdots0^{n_\omega}1^{z_\omega}0^{n_1}$ with
\begin{align*}
m-(2^{n_1}-1)d_0-d_{\ell_1}-(2^{n_2}-&2^{n_1}-1)d_{\kappa_2}-d_{\ell_2}\\&=1^{n_3}0^{z_3}\cdots1^{n_\omega}0^{z_\omega}1^{n_2}-d_{\ell_2} \qquad\qquad\qquad\qquad\quad\mbox{(by~(\ref{repac}))}\\
&=1^{n_3}0^{z_3}\cdots1^{n_\omega}0^{z_\omega}1^{n_2}-1^{n_2}0^{n_3}1^{z_3}\cdots0^{n_\omega}1^{z_\omega}\\
&=1^{n_3-n_2}0^{z_3}\cdots1^{n_\omega}0^{z_\omega}1^{n_2}-0^{n_3}1^{z_3}\cdots0^{n_\omega}1^{z_\omega}\\
&=1^{n_3}0^{z_3}\cdots1^{n_\omega}0^{z_\omega}-0^{n_3-n_2}1^{z_3}\cdots0^{n_\omega}1^{z_\omega}0^{n_2},
\end{align*}
where the last equality is obtained by complementing with respect to $1^{n_3+z_3+\cdots+n_\omega+z_\omega}$.
\end{proof}

\begin{remark}\label{lightext}{\em
Proposition~\ref{spaced} can be generalized in at least two directions. Firstly, the hypothesis $n_u<z_u$ for $1\leq u\leq\omega$ can be weakened to requiring only $n_u\leq z_u$ for $1\leq u\leq\omega$ without altering the main conclusion $r(m)=1+2^{n_\omega}$. In fact, the explicit description of the non-trivial $r_\ell$'s changes only slightly: Note that $\kappa_u=\ell_u$ (and of course $d_{\kappa_u}=d_{\ell_u}$) whenever $n_u=z_u$, in which case the two values of $r_{\kappa_u}$ and $r_{\ell_u}$ will merge into the single
\begin{equation}\label{fab}
r_{\kappa_u}=r_{\ell_u}=2^{n_u}-2^{n_{u-1}}
\end{equation}
(interpreting $2^{n_0}$ as zero). We leave as an exercise for the reader adapting the proof of Proposition~\ref{spaced} to prove~(\ref{fab}).
}\end{remark}

\begin{remark}\label{spacedgeneralized}{\em
More generally, for $m$ as in Proposition~\ref{spaced} (and with the hypothesis $n_u<z_u$ replaced by the more general requirement $n_u\leq z_u$), assume that the binomial coefficient
\begin{equation}\label{obstrgenel}
\binom{\,m+j\;}{m+i}
\end{equation}
is odd for some integer $i\in\{0,1,\ldots,2^{z_\omega}-1\}$ and $j=(i+1)(2^{n_{\omega}}+1)-2$. Then the proof of Proposition~\ref{spaced} can be adapted to give $r(m+i)=1+2^{n_\omega}$. Note that the hypothesis $i<2^{z_\omega}$ implies that the first $2\omega-2$ terms in $\cbe(m+i)$ agree with those of $\cbe(m)$, though the number and form of the subsequent terms in the two codified binary sequences might bear no relationship to each other. So, this example allows us to identify instances of $m'$ where the suitable spacing conditions in Proposition~\ref{spaced} hold only partially, and yet $r(m')$ has a simple description (see Proposition~\ref{no crecimiento w=2 B}).
}\end{remark}

\begin{remark}\label{fibnnnnaci}{\em
The binomial coefficient~(\ref{obstrgenel}) is odd in many instances. A simple way to see this is by observing that the mod~2 value of~(\ref{obstrgenel}) agrees with that of the binomial coefficient~$\binom{j}{i}$ whenever the condition $i<2^{z_\omega}$ is strengthened to $j<2^{z_\omega}$. (The latter hypothesis can be thought of as requiring that the dyadic ``tail'' $i$ in $m+i$ is ``far enough'' from the last block of ones in the binary expansion of $m$). It is then worth noticing that the mod 2 values of~$\binom{j}{i}$ (as $i$ varies) have an interesting arithmetical behavior. Consider, for simplicity, the case $\omega=1=n_1$, where a nice Fibonacci-type fractal pattern arises for the parity properties of the resulting binomial coefficient $\binom{j}{i}=\binom{3i+1}{i}$. Indeed, if we list the mod 2 values of $\binom{3i+1}{i}$ in the range $2^\ell\leq i\leq2^{\ell+1}-2$ with $i$ even and $\ell\geq 2 $, we get the first $2^{\ell-2}$ numbers in the series~(\ref{seriespaperv}) below, followed by $2^{\ell-2}$ zeros.
\begin{equation}\label{seriespaperv}
1^3, 0, 1^2, 0^2, 1^3, 0^5, 1^3, 0, 1^2, 0^{10}, 1^3, 0, 1^2, 0^2, 1^3, 0^{21}, 
1^3, 0, 1^2, 0^2, 1^3, 0^5, 1^3, 0, 1^2, 0^{42}, \ldots
\end{equation}
Here the notation ``$a^b\hspace{.2mm}$'' stands for ``$a,a,\ldots,a$'' where $a$ is repeated $b$ times. The Fibonacci-type behavior enters as follows: Let $f_c$ denote the sequence of the first $2^c$ digits in~(\ref{seriespaperv}). For instance $f_0=(1)$ and $f_1=(1,1)$. Then, for $c\geq2$, $f_c$ is the concatenation of $f_{c-1}$ followed by $f_{c-2}$, and followed finally by $2^{c-2}$ zeros.
}\end{remark}

\begin{remark}\label{motivaciondelotropaper}{\em
The case $i=0$ in Remark~\ref{spacedgeneralized} specializes to Proposition~\ref{spaced}. In turn, the case $n_1=\omega=1$ in Proposition~\ref{spaced} was obtained in~\cite[Theorem~4.3]{GGGL} as part of a series of sharp results for the higher topological complexity of certain families of flag manifolds. In fact, it was the stabilization phenomenon ``$\TC_s\to s\hdim$'' noted in~\cite{GGGL} for flag manifolds that motivated the authors of this paper to take a closer look at the situation for real projective spaces.
}\end{remark}

Other instances in the explicit description of the function $r(m)$ are described in the remainder of this section (for $m$ even), with attention restricted to cases where the binary expansion of $m$ has at most two blocks of ones. For instance, Proposition~\ref{spaced} (as generalized in Remark~\ref{lightext}) and Proposition~\ref{pocadivisibilidad} below account for a full description of the function $r(m)$ when $m$ is even and has a single block of ones. The proofs of Propositions~\ref{pocadivisibilidad} and~\ref{pocadivisibilidad w=2}--\ref{noespaciamientow=2} below follow the same strategy as that used in the proof of Proposition~\ref{spaced}; however the actual arguments are much easier and, consequently, will be left as an exercise for the diligent reader. We will only focus on cases where $m$ is even, in particular $e(m)=0$ and $d_{\ell}=d_0-\ell$. Accordingly, we will specify (the binary expansion of)~$d_\ell$, but will omit explicit reference to $\ell$. (Both $\ell$ and $d_\ell$ were described explicitly in the statement of Proposition~\ref{spaced} for proof-referencing purposes.)

\begin{proposition}\label{pocadivisibilidad}
Let $m$ be even with $\cbe(m)=(n,z)$ and $n>z$. Then $r(m)=1+\sum_{i=0}^{\sigma}2^{n-i z}$, where $\sigma$ stands for the largest integer which is strictly smaller than $n/z$. Explicitly, the non-zero numbers $r_\ell$ $(0\leq\ell\leq t)$ in Definition~\ref{nataliadef} hold for $\ell\in\{\ell_1,\ell_2\}$ with $r_{\ell_1}=\sum_{i=0}^{\sigma}2^{n-iz}-1$ and $r_{\ell_2}=1$, where $d_{\ell_1}=1^z$ and $d_{\ell_2}=1^{n-\sigma z}$.
\end{proposition}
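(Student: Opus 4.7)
The plan is to apply Definition~\ref{nataliadef} step by step, exploiting the extremely simple binary shape $m=1^{n}0^{z}$. Since $m$ is even we have $e=0$; since $m$ has binary length $n+z$ we get $k=n+z$ and $d_0=2^{n+z}-m-1=2^{n+z}-(2^n-1)2^z-1=2^z-1$, so $d_0=1^{z}$, $t=2^z-2$, and $d_\ell=2^z-1-\ell$ for every $0\leq\ell\leq t$. The key observation is that every $d_\ell$ lies strictly below $2^z$, hence has binary support disjoint from that of $m=1^{n}0^{z}$; by Kummer's theorem, each binomial coefficient $\binom{m+d_\ell}{d_\ell}$ is odd, so the parity clauses in Definition~\ref{nataliadef} never trigger and every $r_\ell$ is determined purely by the floor-division rule.

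Next I would compute $r_0=\lfloor m/d_0\rfloor$ using the geometric identity $(2^z-1)\sum_{i=0}^{\sigma}2^{n-iz}=2^{n+z}-2^{n-\sigma z}$, which yields $r_0=\sum_{i=0}^{\sigma}2^{n-iz}-1$ with running remainder $m-d_0r_0=2^{n-\sigma z}-1$. The choice of $\sigma$ as the largest integer strictly smaller than $n/z$ is exactly what guarantees that this remainder is non-negative and (when $z\nmid n$) strictly smaller than $d_0$. Setting $\ell_2:=2^z-2^{n-\sigma z}$, for any intermediate index $0<\ell<\ell_2$ we have $d_\ell=2^z-1-\ell>2^{n-\sigma z}-1$, so the running remainder is smaller than the divisor and $r_\ell=0$. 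At $\ell=\ell_2$ the divisor $d_{\ell_2}=2^{n-\sigma z}-1$ exactly equals the running remainder, producing $r_{\ell_2}=1$ and dropping the remainder to $0$; all subsequent $r_\ell$ vanish for the same reason. Adding up gives $r(m)=1+r_0+r_{\ell_2}=1+\sum_{i=0}^{\sigma}2^{n-iz}$, as claimed.

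The only delicate point is the edge case $z\mid n$, where $\sigma=n/z-1$ forces $n-\sigma z=z$ and hence $\ell_1=\ell_2=0$: the intermediate-vanishing step is then vacuous, $m$ is exactly divisible by $d_0$, and the two contributions $r_{\ell_1}$ and $r_{\ell_2}$ merge into the single value $r_0=\sum_{i=0}^{\sigma}2^{n-iz}$, still yielding the stated value of $r(m)$. Beyond this case split and the elementary base-$2$ arithmetic above, I do not foresee any real obstacle; the argument is strictly easier than the proof of Proposition~\ref{spaced} because the complementary divisor $d_0$ is so small that no binomial-coefficient parity issue ever arises, and the whole proposition reduces to a single long-division of $m=1^{n}0^{z}$ by $1^{z}$ carried out in base two.
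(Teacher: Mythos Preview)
Your argument is correct and is exactly the approach the paper intends: it explicitly leaves this proof as an exercise ``following the same strategy as that used in the proof of Proposition~\ref{spaced}'', and your direct unwinding of Definition~\ref{nataliadef} does precisely that. Your key simplification---that every $d_\ell\le d_0=2^z-1$ has binary support disjoint from that of $m=1^n0^z$, so all the binomial coefficients $\binom{m+d_\ell}{d_\ell}$ are odd and the parity clauses never fire---is the reason the paper calls this case ``much easier'', and your handling of the edge case $z\mid n$ matches the paper's own remark that $r_{\ell_1}$ and $r_{\ell_2}$ then merge into the single value $\sum_{i=0}^{\sigma}2^{n-iz}$.
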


Just as in Remark~\ref{lightext}, when $n-\sigma z=z$, so that $\ell_1=\ell_2$, the two values $r_{\ell_1}$ and $r_{\ell_2}$ should be interpreted as merging into the single $r_{\ell_1}=r_{\ell_2}=\sum_{i=0}^{\sigma}2^{n-iz}$. A similar phenomenon applies with Propositions~\ref{pocadivisibilidad w=2}--\ref{noespaciamientow=2}, but we will make no further comments on such a direction.

\begin{remark}{\em
The weakest instance in Proposition~\ref{pocadivisibilidad} holds with $z_1=1$ (that is, when $m+2$ is a 2-power), for then $r(m)$ agrees with the linear function $\ell(m)$ in the introduction. This situation should be compared with the observation in Example~\ref{el de la evidencia} regarding the hardness of the immersion problem for $\RP^m$ when $m+1$ is a 2-power.
}\end{remark}

When $m$ is even and $\cbe(m)=(n_1,z_1,n_2,z_2)$, the hypotheses in Proposition~\ref{spaced} (as generalized in Remark~\ref{lightext}) become
\begin{equation}\label{three4w=2}
\mbox{$n_1<n_2$, $\;n_1\leq z_1$, \;and $\;n_2\leq z_2$.}
\end{equation}
The following results describe the value of $r(m)$ when all but one of these inequalities hold. Note that Proposition~\ref{pocadivisibilidad w=2} below is analogous to Proposition~\ref{pocadivisibilidad}, while Proposition~\ref{no crecimiento w=2 B} below fits in the setting of Remark~\ref{spacedgeneralized} (as simplified in Remark~\ref{fibnnnnaci}).

\begin{proposition}\label{pocadivisibilidad w=2}
Let $m$ be even with $\cbe(m)=(n_1,z_1,n_2,z_2)$, $n_1\leq z_1$ and $\max\{n_1,z_2\}<n_2$. Then $r(m)=1+\sum_{i=0}^{\sigma}2^{n_2-i z_2}$ where $\sigma$ stands for the largest integer which is strictly smaller than $n_2/z_2$. Explicitly, the non-zero numbers $r_\ell$ $(0\leq\ell\leq t)$ in Definition~\ref{nataliadef} hold for $\ell\in\{\kappa_1,\ell_1,\kappa_2,\ell_2\}$ with $r_{\kappa_1}=2^{n_1}-1$, $r_{\kappa_2}=\sum_{i=0}^{\sigma}2^{n_2-iz_2}-2^{n_1}-1$, and $r_{\ell_1}=r_{\ell_2}=1$, where $d_{\kappa_1}=1^{z_1}0^{n_2}1^{z_2}$, $d_{\kappa_2}=1^{z_2}$, $d_{\ell_1}=1^{n_1}0^{n_2}1^{z_2}$, and $d_{\ell_2}=1^{n_2-\sigma z_2}$.
\end{proposition}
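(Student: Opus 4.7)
The plan is to follow the template of the proof of Proposition~\ref{spaced} for the first-block phase and then splice in a Proposition~\ref{pocadivisibilidad}-style long division for the second block (where the condition $n_2>z_2$ forces a geometric-sum pattern). First I would confirm that the four binomial coefficients $\binom{m+d_\ast}{d_\ast}$, for $\ast\in\{\kappa_1,\ell_1,\kappa_2,\ell_2\}$, are odd. By Kummer/Lucas, this reduces to verifying disjoint binary supports: $d_{\kappa_1}=d_0$ is bit-complementary to $m$; the top $1^{n_1}$-block of $d_{\ell_1}=1^{n_1}0^{n_2}1^{z_2}$ fits inside the $0^{z_1}$-block of $m$ (using $n_1\leq z_1$) and its bottom $1^{z_2}$ fits inside the trailing $0^{z_2}$-block of $m$; and $d_{\kappa_2}=1^{z_2}$ and $d_{\ell_2}=1^{n_2-\sigma z_2}$ both have length at most $z_2$, so they fit inside the trailing $0^{z_2}$ of $m$.

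The first-block verification---$r_{\kappa_1}=2^{n_1}-1$, $r_{\ell_1}=1$, and vanishing of $r_\ell$ both for $0<\ell<\ell_1$ and for $\ell_1<\ell<\kappa_2$---is a verbatim repetition of the case $u=1$ in the proof of Proposition~\ref{spaced}, with the merging convention applied at $n_1=z_1$. The key output of this phase is the clean algebraic identity
\[
m-(2^{n_1}-1)d_0-d_{\ell_1}=(2^{n_2}-1)\,2^{z_2}-2^{n_1}\,d_{\kappa_2},
\]
which I would verify directly by expanding each of $m$, $d_0$, and $d_{\ell_1}$ as a sum of powers of $2$ in terms of $n_1,z_1,n_2,z_2$ (the $2^{z_1+n_2+z_2}$-terms cancel, while the low-order contributions bundle into a single $2^{n_1}d_{\kappa_2}$).

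The second-block phase is then a Proposition~\ref{pocadivisibilidad}-style long division of the above remainder by $d_{\kappa_2}=2^{z_2}-1$. Iterating $\sigma$ times the identity $2^{a+z_2}=d_{\kappa_2}\cdot 2^a+2^a$ (with $a=n_2,n_2-z_2,\ldots,n_2-(\sigma-1)z_2$) yields the decomposition
\[
(2^{n_2}-1)\,2^{z_2}=d_{\kappa_2}\Bigl(\textstyle\sum_{i=0}^{\sigma}2^{n_2-iz_2}-1\Bigr)+(2^{n_2-\sigma z_2}-1),
\]
and subtracting the extra $2^{n_1}d_{\kappa_2}$ gives quotient $r_{\kappa_2}=\sum_{i=0}^{\sigma}2^{n_2-iz_2}-2^{n_1}-1$ and remainder $d_{\ell_2}=2^{n_2-\sigma z_2}-1$, which is strictly smaller than $d_{\kappa_2}$ (the boundary case $n_2=(\sigma+1)z_2$ triggers the merging convention). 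The vanishing of $r_\ell$ for $\kappa_2<\ell<\ell_2$ is then a size argument ($d_\ell>d_{\ell_2}$ forces $r_\ell=0$ from the leftover), and $r_{\ell_2}=1$ because $d_{\ell_2}$ divides the final leftover exactly once. The main obstacle should be pinning down the initial identity above---once that is in hand, the long division unfolds exactly as in Proposition~\ref{pocadivisibilidad}, and the remaining Lucas/size vanishings are routine in the spirit of Proposition~\ref{spaced}.
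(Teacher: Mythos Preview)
Your proposal is correct and is precisely the argument the paper has in mind: the paper does not spell out a proof of this proposition, but explicitly says it ``follow[s] the same strategy as that used in the proof of Proposition~\ref{spaced}'' and is left as an exercise, so your plan of running the $u=1$ phase of that proof verbatim (under $n_1\le z_1$, with the Remark~\ref{lightext} merging at $n_1=z_1$) and then handling the second block by the Proposition~\ref{pocadivisibilidad} long division is exactly on target. Your key identity $m-(2^{n_1}-1)d_0-d_{\ell_1}=(2^{n_2}-1)2^{z_2}-2^{n_1}d_{\kappa_2}$ checks out, the telescoping decomposition of $(2^{n_2}-1)2^{z_2}$ by $d_{\kappa_2}=2^{z_2}-1$ gives the stated $r_{\kappa_2}$ and remainder $d_{\ell_2}$, and the intermediate vanishings for $\kappa_2<\ell<\ell_2$ are indeed pure size comparisons (every such $d_\ell<2^{z_2}$ has support disjoint from $m$, so the binomials are all odd and only $d_\ell>d_{\ell_2}$ matters).
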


\begin{proposition}\label{no crecimiento w=2 B}
Let $m$ be even with $\cbe(m)=(n_1,z_1,n_2,z_2)$ and $n_2\leq n_1\leq \min\{z_1,z_2\}$. Then $r(m)=1+2^{n_1}$. Explicitly, the non-zero numbers $r_\ell$ $(0\leq\ell\leq t)$ in Definition~\ref{nataliadef} hold for $\ell\in\{\kappa_1,\ell_1\}$ with $r_{\kappa_1}=2^{n_1}-1$ and $r_{\ell_1}=1$, where $d_{\kappa_1}=1^{z_1}0^{n_2}1^{z_2}$ and $d_{\ell_1}=1^{n_2}0^{z_2}1^{n_1}$.
\end{proposition}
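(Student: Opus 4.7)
The plan is to follow the strategy of the proof of Proposition~\ref{spaced}, but the argument will be substantially shorter because only two of the $r_\ell$'s turn out to be non-zero and the cumulative residual vanishes exactly after the second of them. Throughout I write $m=1^{n_1}0^{z_1}1^{n_2}0^{z_2}$, note that $e(m)=0$, and use that $d_0=1^{z_1}0^{n_2}1^{z_2}$ is the binary complement of~$m$ (so $\binom{m+d_0}{d_0}$ is odd and $d_\ell=d_0-\ell$ for $0\leq\ell\leq t=d_0-1$).

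Step~1 (value of $r_0$). First I would verify by a direct binary subtraction, structurally identical to~(\ref{mhos}), the identity
$$m-(2^{n_1}-1)d_0=1^{n_2}0^{z_2}1^{n_1}.$$
The hypothesis $n_1\leq z_1$ ensures that this quantity is at most $d_0$, with equality only in the fully degenerate case $n_1=z_1=n_2=z_2$ (which is absorbed by the merged formula~(\ref{fab}) of Remark~\ref{lightext}). Otherwise one obtains $r_0=r_{\kappa_1}=2^{n_1}-1$, and the identification $m-(2^{n_1}-1)d_0=d_{\ell_1}$ sets up the bookkeeping for the subsequent steps.

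Step~2 (vanishing of $r_\ell$ for $0<\ell<\ell_1$). Arguing inductively on $\ell$, assume $r_1=\cdots=r_{\ell-1}=0$, so that the numerator of the floor defining $r_\ell$ is exactly $d_{\ell_1}$. Since $d_\ell>d_{\ell_1}$ strictly (as $\ell<\ell_1$), the floor equals $0$, regardless of the parity of $\binom{m+d_\ell}{d_\ell}$. So $r_\ell=0$. In contrast to Proposition~\ref{spaced}, no bit-by-bit inspection of the intermediate $d_\ell$'s is needed.

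Step~3 (value of $r_{\ell_1}$). Since both numerator and denominator of the defining floor equal $d_{\ell_1}$, we get $r_{\ell_1}=1$ as soon as $\binom{m+d_{\ell_1}}{d_{\ell_1}}$ is odd. By Kummer's theorem, this reduces to the disjointness of the bit-supports of $m$ and $d_{\ell_1}=1^{n_2}0^{z_2}1^{n_1}$. Inspecting the two bit-blocks of $d_{\ell_1}$ against the two bit-blocks of $m$ produces three non-overlap requirements which are precisely the three hypotheses $n_1\leq z_2$, $n_2\leq n_1$ and $n_1\leq z_1$. This is the unique point in the proof where all three inequalities are used in an essential way. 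Finally, Step~4 (vanishing of $r_\ell$ for $\ell>\ell_1$) is automatic, since after $\ell_1$ the residual $m-(2^{n_1}-1)d_0-d_{\ell_1}=0$, forcing $r_\ell=0$ for every $\ell>\ell_1$. Assembling the four steps yields $r(m)=1+r_{\kappa_1}+r_{\ell_1}=1+2^{n_1}$, as claimed. I expect Step~1 to be the most laborious part (mere bit arithmetic, but requiring some care with carries), while Step~3 is the conceptually critical one: it is the step that explains the role of each of the three hypotheses in the statement.
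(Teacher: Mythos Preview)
Your argument is correct and follows exactly the strategy the paper prescribes: the authors explicitly leave this proof as an exercise, saying only that it ``follow[s] the same strategy as that used in the proof of Proposition~\ref{spaced}; however the actual arguments are much easier.'' Your Step~1 reproduces the computation~(\ref{mhos}) verbatim (specialised to $\omega=2$), and your handling of the degenerate case via merging is precisely what the paper sanctions in the paragraph following Proposition~\ref{pocadivisibilidad}.

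One nice feature worth highlighting: your Step~2 is genuinely simpler than the corresponding portion of the proof of Proposition~\ref{spaced}. There, showing $r_\ell=0$ for $0<\ell<\ell_1$ required inspecting which bit of $d_\ell$ gets flipped and splitting into cases (even binomial versus numerator too small). You avoid this entirely by observing that the residual $m-(2^{n_1}-1)d_0$ is \emph{exactly} $d_{\ell_1}$, so $d_\ell>d_{\ell_1}$ forces the floor to vanish regardless of the binomial's parity. This shortcut is special to Proposition~\ref{no crecimiento w=2 B}: in Proposition~\ref{spaced} the residual $1^{n_2}0^{z_2}\cdots 1^{n_\omega}0^{z_\omega}1^{n_1}$ and $d_{\ell_1}=1^{n_1}0^{n_2}1^{z_2}\cdots0^{n_\omega}1^{z_\omega}$ have the same bit-length but are different numbers, so the bit-by-bit argument there is unavoidable.

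A minor expository quibble: your claim that Step~3 is ``the unique point where all three inequalities are used in an essential way'' is slightly off. In Step~1, the inequality $d_{\ell_1}\le d_0$ in the boundary case $n_1=z_1$ also leans on $n_2\le n_1$ (and, when $n_2=n_1$ too, on $n_1\le z_2$). This does not affect the correctness of the proof, only the commentary.
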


\begin{proposition}
Let $m$ be even with $\cbe(m)=(n_1,z_1,n_2,z_2)$ and $n_2\leq z_2<n_1\leq z_1$. Then $r(m)=1+2^{n_1}+2^{\min\{n_2,n_1-z_2\}}$. Explicitly, the non-zero numbers $r_\ell$ $(0\leq\ell\leq t)$ in Definition~\ref{nataliadef} hold for $\ell\in\{\kappa_1,\ell_1,\kappa_2,\ell_2\}$ with $r_{\kappa_1}=2^{n_1}-1$, $r_{\kappa_2}=2^{\min\{n_2,n_1-z_2\}}-1$, $r_{\ell_1}=r_{\ell_2}=1$, where $d_{\kappa_1}=1^{z_1}0^{n_2}1^{z_2}$, $d_{\kappa_2}=1^{z_2}$, and 
$$\begin{matrix*}[l]
d_{\ell_1}=1^{n_2}0^{n_1}1^{z_2}\quad&\mbox{and}\quad d_{\ell_2}=1^{n_1-z_2},\quad&\mbox{provided}\quad n_2\geq n_1-z_2;\\
d_{\ell_1}=1^{n_2}0^{z_2}1^{n_1-n_2-z_2}0^{n_2}1^{z_2}\quad&\mbox{and}\quad d_{\ell_2}=1^{n_2},\quad&\mbox{provided}\quad n_2\leq n_1-z_2.
\end{matrix*}$$
\end{proposition}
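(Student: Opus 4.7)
The plan is to mimic the strategy used in the proof of Proposition~\ref{spaced} (and its companion~\ref{pocadivisibilidad w=2}): one successively pins down each non-zero $r_\ell$ of Definition~\ref{nataliadef} by direct binary arithmetic, and rules out the intermediate $\ell$'s by the ``parity-or-magnitude'' dichotomy. Since $m$ is even, $e(m)=0$ and $d_\ell=d_0-\ell$, with $d_0=0^{n_1}1^{z_1}0^{n_2}1^{z_2}$ being the binary complement of $m$. One checks first that each candidate $d_{\kappa_i}$ and $d_{\ell_i}$ listed in the statement has binary expansion disjoint from that of $m$, so the binomial coefficients $\binom{m+d_{\kappa_i}}{d_{\kappa_i}}$ and $\binom{m+d_{\ell_i}}{d_{\ell_i}}$ are all odd.

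I would first verify, via binary subtraction, that $m-(2^{n_1}-1)d_0=1^{n_2}0^{z_2}1^{n_1}$ and, using the hypothesis $n_1\le z_1$, that $2^{n_1}d_0>m$; this gives $r_{\kappa_1}=r_0=2^{n_1}-1$. The vanishing $r_\ell=0$ for $0<\ell<\ell_1$ then follows exactly as in Proposition~\ref{spaced}: a 1 of $d_\ell$ lying in the $0^{n_2}$ block of $d_0$ forces $\binom{m+d_\ell}{d_\ell}$ to be even, while a 1 in the leading $0^{n_1}$ block forces $d_\ell\geq 2^{n_2+z_2+z_1}$, which exceeds the residue $1^{n_2}0^{z_2}1^{n_1}$.

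Next, checking $d_{\ell_1}\le m-d_0r_0<2d_{\ell_1}$ yields $r_{\ell_1}=1$; this is done separately in the two cases $n_2\ge n_1-z_2$ and $n_2\le n_1-z_2$, and a direct computation then shows that in both cases
\[
m-d_0r_0-d_{\ell_1}=(2^{\min\{n_2,n_1-z_2\}}-1)\,d_{\kappa_2}+d_{\ell_2}.
\]
From this residue identity the remaining recursion runs on autopilot: the vanishing of $r_\ell$ on $(\ell_1,\kappa_2)$ and on $(\kappa_2,\ell_2)$ uses the same parity-or-magnitude dichotomy as before; the values $r_{\kappa_2}=2^{\min\{n_2,n_1-z_2\}}-1$ and $r_{\ell_2}=1$ are read off from the identity; and the vanishing residual after subtracting $r_{\ell_2}d_{\ell_2}$ ensures $r_\ell=0$ for all $\ell>\ell_2$.

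The main obstacle is the explicit description of $d_{\ell_1}$ when $n_2\le n_1-z_2$. In that regime the $n_1$ ones of $d_{\ell_1}$ cannot occupy a single contiguous block, because the natural home $0^{z_2}$ provided by $m$ has length $z_2<n_1$; instead they must be arranged as $1^{n_2}0^{z_2}1^{n_1-n_2-z_2}0^{n_2}1^{z_2}$, with the middle block straddling what would otherwise be part of the residue. Verifying that no intermediate $\ell\in(\ell_1,\kappa_2)$ yields a viable $d_\ell$ (one both giving an odd binomial coefficient \emph{and} fitting in the residue) is where the case split $n_2\ge n_1-z_2$ vs $n_2\le n_1-z_2$ enters essentially and cannot be absorbed into a uniform treatment.
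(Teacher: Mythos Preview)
Your proposal is correct and follows exactly the strategy the paper prescribes (the paper leaves this proposition as an exercise, pointing to the template of Proposition~\ref{spaced}); in particular your residue identity $m-d_0r_0-d_{\ell_1}=(2^{\min\{n_2,n_1-z_2\}}-1)d_{\kappa_2}+d_{\ell_2}$ is the heart of the matter and checks out in both cases. One small caveat: your ``parity-or-magnitude'' dichotomy for $0<\ell<\ell_1$ is misstated, since $d_\ell<d_0<2^{z_1+n_2+z_2}$ can never carry a $1$ in the leading $0^{n_1}$ block; the case you actually must handle is $d_\ell>d_{\ell_1}$ with bits only in the $1^{z_1}$- and $1^{z_2}$-zones, and there the maximality of the low zone in $d_{\ell_1}$ forces the high-zone part of $d_\ell$ to strictly exceed that of $d_{\ell_1}$, whence $d_\ell\geq(2^{n_2}-1)2^{n_1+z_2}+2^{n_2+z_2}>\text{residue}$ using $n_2+z_2\geq n_1$ (and the analogous bound in the other case).
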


\begin{proposition}\label{noespaciamientow=2}
Let $m$ be even with $cbe(m)=(n_1,z_1,n_2,z_2)$ and $z_1<n_1<n_2\leq z_2$. Then $r(m)=1+2^{n_2}$. Explicitly, the non-zero numbers $r_\ell$ $(0\leq\ell\leq t)$ in Definition~\ref{nataliadef} hold for $\ell\in\{\kappa_1,\ell_1,\kappa_2,\ell_2\}$ with $r_{\kappa_1}=2^{n_2}-r_{\kappa_2}-2$, $$r_{\kappa_2}=2^{n_1+1}(2^{n_2-n_1-1}-1)+2^{\rho}(2^{z_1-1}-1)\sum_{i=0}^q2^{i z_1+2}+2^{\rho+1}-1,$$ and $r_{\ell_1}=r_{\ell_2}=1$, where $d_{\kappa_1}=1^{z_1}0^{n_2}1^{z_2}$, $d_{\kappa_2}=1^{z_2}$, $d_{\ell_1}=1^{\rho+1}0^{n_2}1^{z_2}$, and $d_{\ell_2}=1^{n_2}$. Here $q$ and $\rho$ stand, respectively, for the quotient and remainder in the division of $n_1-z_1-1$ by $z_1$.
\end{proposition}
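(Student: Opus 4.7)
The plan is to follow the strategy used in the proof of Proposition~\ref{spaced}: identify the indices $\ell$ for which $\binom{m+d_\ell}{d_\ell}$ is odd (so that $r_\ell$ can potentially be non-zero), verify the four claimed values $r_{\kappa_1},\,r_{\ell_1},\,r_{\kappa_2},\,r_{\ell_2}$ by exhibiting the binary arithmetic hidden in the floor quotients of Definition~\ref{nataliadef}, and then show that all remaining $r_\ell$ vanish because the corresponding numerator is strictly smaller than $d_\ell$. Since $m$ is even, $e=0$ and $d_\ell=d_0-\ell$, so by Lucas's theorem the admissible $d_\ell$ are exactly those whose binary expansion is a submask of the bit-complement of $m$, i.e.\ sits inside the zero-blocks $0^{z_1}$, $0^{z_2}$ and the leading zero block above $2^{\mu+1}$. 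A total count of $r_{\kappa_1}+r_{\ell_1}+r_{\kappa_2}+r_{\ell_2}=2^{n_2}$ would then yield $r(m)=1+2^{n_2}$.

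The first technical step is to verify $r_{\kappa_1}=2^{n_2}-r_{\kappa_2}-2$ with $d_{\kappa_1}=1^{z_1}0^{n_2}1^{z_2}$ by checking $r_{\kappa_1}\cdot d_{\kappa_1}\leq m<(r_{\kappa_1}+1)\,d_{\kappa_1}$ through binary addition; here the hypothesis $z_1<n_1$ forces carries that ripple across the $1^{n_1}$ block, and $r_{\kappa_1}$ bookkeeps the number of clean fits of $d_{\kappa_1}$ before a carry spills past the top of $m$. Then, after subtracting $d_{\ell_1}\cdot r_{\ell_1}$ (with $d_{\ell_1}=1^{\rho+1}0^{n_2}1^{z_2}$ and $r_{\ell_1}=1$ verified just as in~\eqref{reaesp}), one computes $r_{\kappa_2}$ by floor-dividing the residual by $d_{\kappa_2}=1^{z_2}$. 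The tripartite structure of the announced formula reflects three separate contributions to this residual: the top chunk $2^{n_1+1}(2^{n_2-n_1-1}-1)$ coming from the gap above the $1^{n_1}$ block, the middle chunk $2^{\rho}(2^{z_1-1}-1)\sum_{i=0}^{q}2^{iz_1+2}$ recording how carries of length $z_1$ stack inside the $1^{n_1}$ block (which is precisely why the division of $n_1-z_1-1$ by $z_1$ enters, $q$ counting the full blocks and $\rho$ the leftover), and the tail $2^{\rho+1}-1$ from the bottom. Finally, $r_{\ell_2}=1$ with $d_{\ell_2}=1^{n_2}$ is forced by~\eqref{rt}, and these four values sum to $2^{n_2}$.

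The main obstacle will be ruling out every admissible $\ell\notin\{\kappa_1,\ell_1,\kappa_2,\ell_2\}$. Unlike the spaced setting of Proposition~\ref{spaced}, the condition $z_1<n_1$ permits many additional admissible $d_\ell$ inside the band $\kappa_1<\ell<\ell_1$ (and similarly between $\ell_1$ and $\kappa_2$), because the short $0^{z_1}$ gap allows partial sub-blocks of $1^{z_1}$-ones to coexist with remnants of $1^{z_2}$ without violating the Lucas condition. Ruling each of these out requires binary magnitude comparisons analogous to~\eqref{mhos}--\eqref{reaesp}: after subtracting the previous non-trivial contributions, the residual has binary length strictly less than any of the intermediate admissible $d_\ell$, so the floor quotient collapses to zero. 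I expect this to be the most delicate part of the argument, and it is here that the parameters $q$ and $\rho$ play their essential bookkeeping role by demarcating the exact indices $\ell_1$ and $\ell_2$ at which the residual first becomes large enough to swallow a single copy of $d_{\ell_1}$ and $d_{\ell_2}$, respectively.
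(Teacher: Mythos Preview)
Your plan is exactly the approach the paper intends: the authors explicitly leave Propositions~\ref{pocadivisibilidad} through~\ref{noespaciamientow=2} as exercises to be done ``following the same strategy as that used in the proof of Proposition~\ref{spaced},'' and your outline (Lucas-type admissibility, floor-quotient verification of the four non-zero $r_\ell$'s, magnitude comparisons to kill the intermediate admissible~$d_\ell$) matches that template.

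One slip worth fixing: your appeal to~\eqref{rt} for $r_{\ell_2}=1$ is misplaced, since $e=0$ gives $d_t=1$ while $d_{\ell_2}=2^{n_2}-1>1$, so $\ell_2\neq t$. What you actually need is the direct check that the residual $m-d_{\kappa_1}r_{\kappa_1}-d_{\ell_1}r_{\ell_1}-d_{\kappa_2}r_{\kappa_2}$ equals $d_{\ell_2}$ exactly; then $r_{\ell_2}=1$ and the residual drops to~$0$, forcing every subsequent $r_\ell$ (including $r_t$) to vanish. Also, note that the paper says these arguments are in fact \emph{easier} than those for Proposition~\ref{spaced}: with $\omega=2$ the admissible $d_\ell$ are just the numbers $A\cdot 2^{n_2+z_2}+B$ with $0\le A<2^{z_1}$ and $0\le B<2^{z_2}$, so the intermediate-$\ell$ case analysis you flag as ``most delicate'' is more routine than the inductive loop over general~$\omega$ in Proposition~\ref{spaced}.
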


\section{Immersion dimension via higher TC: an example}
\label{sectionapproach}
In this section we illustrate the $\TC_s$-program devised in the introduction of this paper to approach $\Imm(\RP^m)$. Let us start by considering small dimensional examples, and the way they become part of larger families sharing similar properties.

\medskip
There are three singular situations: $\RP^1$ is a circle, and it certainly fits in the well known description of the higher topological complexity of spheres, where the dimension of the sphere plays the decisive role: $\TC_s(S^{2k})=s$ while $\TC_s(S^{2k+1})=s-1$. Closely related is the case of the $H$-spaces $\RP^3$ and $\RP^7$, whose higher topological complexity has been described in Example~\ref{hopfspaces}.

\medskip
The first truly interesting case is that of the projective plane, which immerses optimally in three-dimensional Euclidean space as the Boy Surface, so $\TC_2(\RP^2)=3$. Note that this is just one below the dimensional bound in Proposition~\ref{ulbTCn}, which contrasts with the fact (from Theorem~\ref{nataliathm}) that
\begin{equation}\label{planoproyectivo}
\mbox{$\TC_s(\RP^2)=2s\,$ for any $s\geq3$.}
\end{equation}
It is worth remarking that~(\ref{planoproyectivo}) is part of a more general phenomenon: Any closed (orientable or not) surface $S$, other than the sphere and the torus, has $\TC_s(S)=2s$ whenever $s\geq3$ (c.f.~\cite[Theorem~5.1]{GGGL}) ---this should also be compared to the fact that the computation of the $\TC_2$-value of the Klein bottle has become such an elusive challenge! For our purposes, a much more interesting observation to make at this point  is that~(\ref{planoproyectivo}) generalizes (again in view of Theorem~\ref{nataliathm}) to the fact that
$$\mbox{
$\TC_s(\RP^{2^a})=2^{a}s$ for any $s\geq3$, while $\TC_2(\RP^{2^a})=\Imm(\RP^{2^a})=2^{a+1}-1$.
}$$
In terms of the $\delta_s$ functions, such a situation translates into the equalities
\begin{equation}\label{sucri1}
\delta_3(2^a)=0 \;\;\mbox{and} \;\; \delta_2(2^a)=1.
\end{equation}
Since $r(2^a)=3$ for $a\geq1$ (Proposition~\ref{spaced}), this yields a nicely regular increasing behavior for the critical sequence~(\ref{sqnmchica}) when $m=2^a$. Admittedly, the length of the sequence~(\ref{sucri1}) is ridiculously short but, as discussed next, a similar regularity phenomenon could actually be holding in the next obvious example, namely $m=2^a+2^{a+1}$ with $a\geq1$ (the special case $m=3$ has been considered above), which we discuss next.

\medskip
At first sight, the situation is slightly special for $m=2^a+2^{a+1}$ if $a=1$, so we consider it first. The immersion dimension of $\RP^6$ is known to be $\TC_2(\RP^6)=7$, while Proposition~\ref{pocadivisibilidad} gives $r(6)=7$. Thus, the critical sequence~(\ref{sqnmchica}) now becomes
\begin{equation}\label{critical6weak}
\delta_7(6)=0,\;\;\delta_6(6)={?},\;\;\delta_5(6)={?},\;\;\delta_4(6)={?},\;\;\delta_3(6)={?},\;\;\delta_2(6)=5.
\end{equation}
Furthermore, the proof of Theorem~\ref{nataliathm} yields $(x_1+x_2)^7(x_2+x_3)^7\cdots(x_1+x_7)^7\neq0$. In particular, if we only consider the first $j-1$ factors ($2\leq j\leq7$), we obtain the last instance in the chain of inequalities $\delta_j(6)\leq G_j(6)\leq7-j$. Consequently,~(\ref{critical6weak}) becomes
\begin{equation}\label{critical6strong}
\delta_7(6)=0,\;\;\delta_6(6)\leq1,\;\;\delta_5(6)\leq2,\;\;\delta_4(6)\leq3,\;\;\delta_3(6)\leq4,\;\;\delta_2(6)=5.
\end{equation}
It would be interesting to know whether~(\ref{critical6strong}) really has the nice steady increasing behavior suggested by~(\ref{sucri1}), namely if
\begin{equation}\label{pretttty}
\delta_j(6)=7-j  \mbox{ \ for \ }2\leq j\leq7.
\end{equation}
For instance,~(\ref{pretttty}) would hold provided one could prove that the inequalities in~(\ref{critical6strong}) held in the stronger form $\delta_i(6)\leq\delta_{i+1}(6)+1$. At any rate, the following considerations are intended to give numerical evidence toward the possibility that~(\ref{pretttty}) holds, in a suitably generalized way, for any $a\geq1$.

\medskip
For $a\geq2$, Proposition~\ref{spaced} gives $r(2^a+2^{a+1})=5$, so the critical sequence~(\ref{sqnmchica}) now takes the slightly shorter form
$$
\delta_5(2^a+2^{a+1})=0,\;\;\delta_4(2^a+2^{a+1})={?},\;\;\delta_3(2^a+2^{a+1})={?},\;\;\delta_2(2^a+2^{a+1})={?}
$$
The currently known information about $\Imm(\RP^{2^a+2^{q+1}})$ for $a\leq5$ yields\footnote{See Davis' tables in~\cite{tables}, which is our main reference for the immersion facts asserted here.}
\begin{description}
\item[\hspace{1cm}Case $a=2$:] \hspace{.3cm} $\delta_2(12)=6$.
\item[\hspace{1cm}Case $a=3$:] \hspace{.3cm} $\delta_2(24)\in\{9,10\}$.\footnote{Note that $\RP^{24}$ is the smallest dimensional projective space whose immersion dimension is not fully known; yet our purely homological methods suffice to get the exact value of $\TC_s(\RP^{24})$ for $s\geq 5$.}
\item[\hspace{1cm}Case $a=4$:] \hspace{.3cm} $\delta_2(48)\in\{9,10,11\}$.
\item[\hspace{1cm}Case $a=5$:] \hspace{.3cm} $\delta_2(96)\in\{13,14,\ldots,18\}$.
\end{description}
The punch line is that the above facts provide some evidence for potentially extending the estimates in~(\ref{critical6strong}) by the following:

\begin{conjecture}\label{monobeha}
For $a\geq1$ and $\hspace{.6mm}2\leq j\leq r(2^a+2^{a+1})$,
\begin{equation}\label{genesinKW}
\delta_j(2^a+2^{a+1})\leq(r(2^a+2^{a+1})-j)a.
\end{equation}
\end{conjecture}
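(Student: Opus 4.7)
The case $a=1$ (so $m=6$ and $r(6)=7$ by Proposition~\ref{pocadivisibilidad}) can be settled purely cohomologically. The proof of Theorem~\ref{nataliathm} produces a non-trivial class $P_7=\prod_{i=2}^{7}(x_1+x_i)^7$ of total degree $42=7m$ in $H^*((\RP^6)^7)$. Since $H^*((\RP^6)^j)\hookrightarrow H^*((\RP^6)^7)$ is a subring inclusion, the truncation $P_j=\prod_{i=2}^{j}(x_1+x_i)^7$ is non-trivial in $H^*((\RP^6)^j)$; otherwise, multiplying by the remaining factors inside $H^*((\RP^6)^7)$ would yield $P_7=0$. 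Thus $\zcl_j(\RP^6)\geq 7j-7$, giving $\delta_j(6)\leq G_j(6)\leq 7-j=(r(6)-j)\cdot 1$, the conjectured bound for $a=1$.

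For $a\geq 2$ (so $m=3\cdot 2^a$ and $r(m)=5$), the $j=5$ case is again immediate from Theorem~\ref{nataliathm}. For $j\in\{2,3,4\}$, however, the analogous truncation argument falls short: Lucas' theorem shows that the maximum exponent $k$ with $(x_1+x_i)^{k}\neq 0$ in $H^*((\RP^m)^2)$ is $k=m+d_0=4\cdot 2^a-1$, so the best cup-length estimates obtainable from products $\prod_{i=2}^{j}(x_1+x_i)^{n_i}$ are $G_4(m)=3$, $G_3(m)=2^a+2$, and $G_2(m)=2^{a+1}+1$. Only the first of these meets the conjecture, and only when $a\geq 3$.

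Accordingly, any proof of the conjecture in the remaining cases (that is, $j\in\{2,3\}$ for all $a\geq 2$, and additionally $j=4$ for $a=2$) must go beyond the cup-length estimate. A natural strategy is a descent argument: aim to establish the one-step lowering $\TC_j(\RP^m)\geq \TC_{j+1}(\RP^m)-(m-a)$ for $j\leq 4$, and then iterate starting from the equality $\TC_5(\RP^m)=5m$ granted by Theorem~\ref{nataliathm}. At each step, given an optimal motion planner for $e_{j+1}^{\RP^m}$, one would construct a motion planner for $e_{j}^{\RP^m}$ by grafting in local rules coming from an immersion (or partial immersion) of $\RP^m$ into $\mathbb{R}^{m+a}$; this would mirror, at each level $j$, the classical bridge $\TC_2(\RP^m)=\Imm(\RP^m)$ of~\cite{MR1988783}.

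The principal obstacle is the $j=2$ case, equivalent to $\Imm(\RP^{3\cdot 2^a})\leq 2m-3a$. For $a=2$ this is Davis' known value, but for $a\geq 3$ it strictly sharpens the immersion estimates recalled before the statement of the conjecture, so it is at least as hard as improving those estimates themselves. A secondary obstacle, visible already at $j=3$, is that the conjectured bound lies strictly below the cohomological bound $G_j(m)$ for large $a$: closing this gap requires either a secondary (higher-order) cohomological operation that detects the non-sectionability of $e_j^{\RP^m}$, or a genuinely geometric construction of sections---both of which lie outside the purely homological machinery developed in this paper.
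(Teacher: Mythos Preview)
This statement is a \emph{conjecture} in the paper; no proof is offered there, and your proposal is likewise not a proof but a (largely accurate) assessment of what the paper's methods do and do not reach.

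Your treatment of $a=1$ via truncation of the product $\prod_{i=2}^{7}(x_1+x_i)^7$ is exactly how the paper derives~(\ref{critical6strong}), so that case is settled and your argument matches the paper's. Your further observation that the case $j=4$, $a\geq3$ also follows from the cup-length bound (since then $G_4(3\cdot2^a)\leq3\leq a$) is correct and goes slightly beyond what the paper makes explicit. For the remaining cases you rightly note that the conjectured bound lies strictly below $G_j(m)$, so Proposition~\ref{ulbTCn} alone cannot close the gap; the paper concurs, offering only the numerical evidence listed just before the conjecture and pointing, in its closing paragraph, toward Hopf-type obstruction methods as a possible route.

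One correction, however. The $j=2$ instance of the conjecture reads $\delta_2(3\cdot2^a)\leq3a$, i.e.\ $\TC_2(\RP^{3\cdot2^a})\geq 2m-3a$, which is the \emph{non}-immersion statement $\Imm(\RP^{3\cdot2^a})\geq 2m-3a$ (this is precisely Conjecture~\ref{noimmpot}), not the immersion bound $\Imm\leq 2m-3a$ that you wrote. With the inequality reversed, your claim that it would sharpen known bounds is still essentially right (the paper records the single exception $a=4$ in Remark~\ref{wleidhtsu}), but the descent strategy you sketch---grafting local rules from an immersion into $\mathbb{R}^{m+a}$ to build a motion planner---points in the wrong direction: what is needed at each step is an \emph{obstruction} to sectioning $e_j$, not a construction of sections.
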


\begin{remark}\label{wleidhtsu}{\em Kitchloo-Wilson's non-immersion result $\RP^{48}\not\subseteq\,\mathbb{R}^{84}$ (the lowest dimensional new  result in~\cite{MR2475624}, which gives $\delta_2(48)<12$) implies that we should not expect equality to hold in~(\ref{genesinKW}) ---which seems to be compatible with the fact that the potential new non-immersion result in Conjecture~\ref{noimmpot} below is still far from the expected optimal~(\ref{belief80s}). 
}\end{remark}

Our interest in the above discussion comes from the fact that Conjecture~\ref{monobeha} obviously contains (with $j=2$) what would be the new (as far as we are aware of) non-immersion result $\delta_2(2^a+2^{a+1})\leq3a$ for $a\geq2$, i.e.:

\begin{conjecture}\label{noimmpot}
For $a\geq2$, $\Imm(\RP^{2^a+2^{a+1}})\geq2^{a+1}+2^{a+2}-3a$.
\end{conjecture}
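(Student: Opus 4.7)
The most direct plan is to prove the stronger Conjecture~\ref{monobeha}, since Conjecture~\ref{noimmpot} is exactly its $j=2$ instance. Write $m=3\cdot 2^a$. With the notation of Definition~\ref{nataliadef} one has $e(m)=0$, $d_0=2^a-1$ and $t=2^a-2$, and the only non-zero $r_\ell$'s are $r_0=3$ (with $d_0=2^a-1$) and $r_{\ell_1}=1$ (with $d_{\ell_1}=3$). The base case $j=r(m)=5$ of Conjecture~\ref{monobeha}, namely $\delta_5(m)=0$, follows from Theorems~\ref{estabilizacion} and~\ref{nataliathm} via the non-vanishing in $H^*((\RP^m)^5)$ of
\[
(x_1+x_2)^{4\cdot 2^a-1}(x_1+x_3)^{4\cdot 2^a-1}(x_1+x_4)^{4\cdot 2^a-1}(x_1+x_5)^{3\cdot 2^a+3},
\]
and I would try to propagate the non-vanishing downward to fewer variables.

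For $j=4$, simply truncating to the first three factors yields $\zcl_4(\RP^m)\geq 3(4\cdot 2^a-1)=4m-3$, hence $\delta_4(m)\leq 3$, which already implies the $j=4$ instance of Conjecture~\ref{monobeha} for every $a\geq 3$; the boundary case $a=2$ requires an explicit verification that $\TC_4(\RP^{12})\geq 46$ beyond pure cup-length. For $j=3$, a direct binomial analysis using that each exponent $a_i$ satisfies $a_i\leq 4\cdot 2^a-1$ (otherwise the Frobenius forces $(x_1+x_i)^{a_i}=0$) gives the sharp equality $\zcl_3(\RP^m)=8\cdot 2^a-2$, so cup-length provides only $\delta_3(m)\leq 2^a+2$, which overshoots the target $2a$ for every $a\geq 3$. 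Similarly for $j=2$, since the zero-divisor ideal in $H^*((\RP^m)^2)$ is generated by $x_1+x_2$, Lucas' theorem gives $\zcl_2(\RP^m)=2^{a+2}-1$, and hence cup-length yields only $\delta_2(m)\leq 2^{a+1}+1$, exponentially worse than the conjectured $3a$.

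The main obstacle is therefore unavoidable: Conjectures~\ref{monobeha} and~\ref{noimmpot} cannot be established by mod-$2$ cup-length alone and require input from a finer invariant that still fits into the sectional-category framework of Proposition~\ref{ulbTCn}. The most promising route is to replace $H^*(-;\mathbb{F}_2)$ by a richer cohomology theory such as $ko^*$ or $BP^*$ and to lift the product displayed above to a non-trivial zero-divisor class there, exploiting the secondary-operation technology that accounts for the well-known jump from $\zcl_2(\RP^{12})=15$ to $\Imm(\RP^{12})=18$. Verifying that such a lift survives the successive truncations $5\to 4\to 3\to 2$ in a way matching the tight $(5-j)a$ bound of Conjecture~\ref{monobeha}, and in particular produces the predicted linear rather than exponential rate of increase for $\delta_j$ as $j$ decreases, is in my view the genuine difficulty and the reason the conjecture remains open.
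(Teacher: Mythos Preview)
Your proposal is not a proof, and that is the correct outcome: the statement is labelled a \emph{Conjecture} in the paper precisely because the paper does not prove it. The paper offers no argument for Conjecture~\ref{noimmpot}; it only records supporting numerical evidence from known immersion and non-immersion results for small~$a$ and explains what its validity would imply (e.g.\ settling $\Imm(\RP^{24})$). So there is nothing to compare your attempt against.

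Your analysis of \emph{why} the paper's methods cannot reach the conjecture is essentially correct and worth keeping. The computations of $\zcl_j(\RP^{3\cdot 2^a})$ for $j=2,3,4$ are right, and your conclusion that the mod~$2$ zero-divisor cup-length gives only an exponential bound $\delta_2(3\cdot 2^a)\leq 2^{a+1}+1$, far weaker than the conjectured linear bound $3a$, pinpoints exactly the gap. Your observation that truncating the length-$5$ product already yields the $j=4$ case of Conjecture~\ref{monobeha} for $a\geq 3$ is a genuine (if modest) positive result, consistent with the paper's analogous remark for $m=6$.

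One small correction: your description of the $r_\ell$'s is valid only for $a\geq 3$. When $a=2$ one has $d_0=3$ and $r_0=\lfloor 12/3\rfloor=4$, with all other $r_\ell$ vanishing (this is the ``merging'' phenomenon of Remark~\ref{lightext}, since $n_1=z_1=2$); one still gets $r(12)=5$, so your conclusions are unaffected. Also, be careful with the phrasing ``requires an explicit verification that $\TC_4(\RP^{12})\geq 46$'': this inequality is itself part of the open Conjecture~\ref{monobeha}, not an established fact awaiting a routine check.
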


The truthfulness of the above statement would be particularly interesting in a number of directions. To start, and as noted above, if $a=2$, the non-immersion result in Conjecture~\ref{noimmpot} not only holds true, but it is in fact optimal. A similar situation would hold for $a=3$ as, then, Conjecture~\ref{noimmpot} would actually settle the value of the currently-open immersion dimension of $\RP^{24}$ (one of the iconic cases back in the decade of the 1970's) to be $\TC_2(\RP^{24})=39$. Indeed, Randall's immersion result $$\mbox{$\Imm(\RP^m)\leq2m-9$ \ for $m\equiv0$ mod 8 with $\alpha(m)>1$}$$ would be optimal for $n=24$ due to the ($a=3$)-case in Conjecture~\ref{noimmpot}. For other values of $a$, with the single exception of $a=4$ (see Remark~\ref{wleidhtsu}), Conjecture~\ref{noimmpot} would improve the currently known information on the immersion dimension of $\RP^{2^a+2^{a+1}}$. For instance, the case $a=5$ in Conjecture~\ref{noimmpot} would improve Kitchloo-Wilson's non-immersion result $\Imm(\RP^{96})\geq174$ by three units, and would be within two of being optimal in view of Davis' immersion result $\Imm(\RP^{96})\leq179$.

\medskip
Of course, one could try to apply the $\TC_s$ approach to $\Imm(\RP^m)$ for other families of projective spaces $\RP^m$. For instance, some of the phenomena described above seem to hold for spaces of the form $\RP^{2^a+2^{a+1}+2^{a+2}}\hspace{.4mm}$with $a\geq2$ and, more generally, for spaces $\RP^m$ with $\cbe(m)=(n_1,z_1)$ and $z_1\geq n_1-1$. One could even try to use the same strategy in order to prove (positive) immersion results. Indeed, just as~(\ref{genesinKW}) is a statement about the possibility that the increasing behavior of the critical sequence~(\ref{sqnmchica}) is bounded from above by some linear function, it is natural to try to prove a general statement asserting that, for some fixed integer $\phi(m)$, $\delta_j(m)\geq\delta_{j+1}(m)+\phi(m)$ in the range of the critical sequence~(\ref{sqnmchica}). Such a possibility will most likely need to use stronger homotopy methods (e.g.~the Hopf-type obstruction methods recently developed in~\cite{Hopfpaper}), rather than the homological methods in this paper. For instance, the homotopy obstruction methods in~\cite[Section~2]{MR2649230} seem to lead to a proof of equality in~(\ref{genesinKW}) for $j=r(2^a+2^{a+1})$. Details will appear elsewhere.

\bibliographystyle{plain}
\bibliography{stab}

\bigskip
{\sc Departamento de Matem\'aticas

Centro de Investigaci\'on y de Estudios Avanzados del IPN

Av.~IPN 2508, Zacatenco, M\'exico City 07000, M\'exico

{\tt cadavid@math.cinvestav.mx}

{\tt jesus@math.cinvestav.mx}

{\tt aldo@math.cinvestav.mx}
}
\end{document}